\newtheorem{thm}{Theorem}[section]
\newtheorem{theorem}[thm]{Theorem}
\newtheorem{lemma}[thm]{Lemma}
\newtheorem{cor}[thm]{Corollary}
\newtheorem{corollary}[thm]{Corollary}
\newtheorem{proposition}[thm]{Proposition}
\newtheorem{definition}[thm]{Definition}
\newtheorem{remark}[thm]{Remark}
\newtheorem{question}{Question}
\newcommand{\cA}{{\mathcal A}}
\newcommand{\cE}{{\mathcal E}}
\newcommand{\cF}{{\mathcal F}}
\newcommand{\cH}{{\mathcal H}}
\newcommand{\cI}{{\mathcal I}}
\newcommand{\cJ}{{\mathcal J}}
\newcommand{\cM}{{\mathcal M}}
\newcommand{\cN}{{\mathcal N}}
\newcommand{\cP}{{\mathcal P}}
 \newcommand{\norm}[1]{\left\lVert#1\right\rVert}
\begin{document}

 \title[The ball-covering property]{The ball-covering property of von Neumann algebras and noncommutative symmetric spaces}

\author[J. Huang]{Jinghao Huang}

 \author[K. Kudaybergenov]{Karimbergen Kudaybergenov}
\address{Institute for  Advanced Study in  Mathematics of HIT, Harbin Institute of Technology, Harbin, 150001, China}
\email{{\color{blue}jinghao.huang@hit.edu.cn}}
\email{{\color{blue}karim2006@mail.ru}}

 \author[R. Liu]{Rui Liu}
 \address{School of Mathematical Sciences and LPMC, Nankai University, Tianjin 300071, China}
 \email{{\color{blue}ruiliu@nankai.edu.cn}}

 \thanks{J. Huang was supported the NNSF of China (No.12031004 and 12301160). R. Liu was supported by the NNSF of China (No. 11671214, 11971348 and 12071230). }

\subjclass[2010]{46B20,  46L10, 46E30, 46L52.  }

\keywords{ball-covering property; von Neumann algebra; symmetric operator space.}

\begin{abstract}
We fully characterize those  von Neumann algebras  having the ball-covering property. We also study the ball-covering property of noncommutative symmetric spaces. In particular, we  provide  a number of new  examples of non-separable (commutative and noncommutative) Banach spaces having the ball-covering property.
\end{abstract}

\maketitle

\section{Introduction}

A normed  space $X$ is said to have the {\it ball-covering property} (BCP, for short) if its unit sphere can be covered by the union of countably many open/closed balls not containing the origin\cite{Cheng}.{\
The centers of the balls are called the {\it BCP points} of~$X$.
It is worth noting that the sequence of BCP points is not necessarily bounded. 
Suppose that $\{B_n\}_{n\ge 1}$ is a sequence of balls off the origin whose union covers the unit sphere of $X$.
Then $X$ is said to have the {\it strong ball-covering property} (SBCP)~\cite{LZ2021}, if the radii of $\{B_n\}_{n\ge 1}$ is a bounded sequence. The space $X$ is said to have the {\it uniform ball-covering property} (UBCP) if $X$ has the SBCP, and there exists an open ball $B_0$ containing the origin such that $B_n \cap B_0 =\empty $ for all $n\in \mathbb{N}$.

It turns out that the  BCP is a powerful notion to study geometric and topological properties of Banach spaces, see e.g. \cite{CKZ,CLL,FZ,Shang,AMZ}.
It is known that separable Banach spaces have the BCP but the converse is not true \cite{Cheng, CCL}.
Actually, the BCP only implies the $w^*$-separability of the dual space~\cite{Cheng}.
On the other hand, every Banach space with a $w^*$-separable dual can be $(1+\varepsilon)$-renormed to have the BCP for any $\varepsilon>0$~\cite{CSZ,FZ}.

Cheng~\cite{Cheng}
obtained the first example of
  a non-separable space with the BCP by showing that    $\ell_\infty$ has such a property. However,   Cheng,   Cheng and  Liu \cite{CCL} showed the $\ell_\infty$ can be renormed to fail the BCP, which implies that BCP is not preserved by linear isomorphisms.
  Luo and Zheng showed that for a sequence of normed spaces,
  $$\left(\sum_{k=1}^\infty  \oplus X_k \right)_{\ell_\infty} $$
  has the BCP if and only if each Banach space $X_k$ has the BCP\cite{LZ2020}.
  Note that even though $L_\infty(0,1)$ is isomorphic to $\ell_\infty$\cite[Theorem 4.3.10]{AK},  
 $L_\infty (0,1)$ fails the BCP~\cite[Theorem 3.5]{LZ2020}.
There exist locally compact Hausdorff space $K$  such that $\ell_\infty =C_0(K_1)$ (respectively, $L_\infty(0,1)=C_0(K_2)$), 
the Banach space of all continuous functions on a locally compact Hausdorff space $K$  vanishing at infinity endowed with the supremum norm\cite[Theorem 4.4.3]{KR} (see also \cite[Theorem II.2.2.4]{Blackadar}). 
In a recent joint  paper of the third author\cite{LLLZ}, 
it was shown that $C_0(K)$ has the BCP/UBCP if and only if $K$ has a countable $\pi$-basis\cite[Theorem 2.1]{LLLZ}.
In particular, the locally compact Hausdorff space $K_1 $  
has a $\pi$-basis but the space $K_2$   fails to  have a $\pi$-basis.
Moreover,     a noncommutative version of  Cheng's result was  established  by 
  showing the $B(\ell_p)$, $1\le p<\infty$, has the UBCP\cite[Corollaries 3.1 and  3.4]{LLLZ}.

The first problem considered in this paper is  motivated by results obtained in~\cite{Cheng,LLLZ,LZ2020,CCL}.
\begin{question}
How to characterize those von Neumann algebras having the BCP?
  \end{question} In  Theorem~\ref{main1} below, 
  we show that a von Neumann algebra has the BCP (indeed, the UBCP) if and only if it is atomic and can be represented on a separable Hilbert space.
Our approach relies on the study of positive linear functionals   on von Neumann algebras,  
which is completely different from that of the commutative case  \cite{Cheng,LZ2020}.

Although the Lebesgue spaces $L_p$ ($1\le p\le \infty$) play a primary role in many areas of mathematical analysis, there are other classes of Banach spaces of measurable functions that are also of interest.
The larger classes of  Orlicz spaces and Lorentz spaces (which are not necessarily separable), for example, are of intrinsic importance.
There is a considerable literature dealing with each of these classes, see e.g. \cite{Bennett_S,LT2,KPS}.
In \cite[Theorems 2.5 and  2.6]{LZ2020}, it is shown that for a separable Lorentz sequence space $E=d(w,p)$, $1\le p<\infty $ (or a separable Orlicz sequence space  with the $\Delta_2$-condition) and a sequence of normed spaces,
the space $$\left(\sum\oplus X_k\right)_E$$ has the BCP if and only if all $X_k$'s have the BCP.
However, there are very few concrete examples of non-separable Banach spaces having the BCP.
 Another object of  this paper is to obtain a wide class of (non-separable) Banach spaces having the BCP.
We concentrate on the so-called symmetric function spaces \cite{Bennett_S,LT2} and their noncommutative counterpart~\cite{DP2014,
DPS, Kalton_S}, which is more general than Orlicz spaces and Lorentz spaces.

  \begin{question}
    For which (non-separable) symmetric function spaces $E(0,\infty)$ and semifinite von Neumann algebra $\cM$ (equipped with semifinite faithful normal traces~$\tau$), the noncommutative symmetric spaces $E(\cM,\tau)$ have the BCP?
  \end{question}

  In Theorem \ref{main}, we show that the Fatou norm is a sharp condition for $E(\cM,\tau)$ (on a separable Hilbert space) to have the UBCP/BCP, which provides a wide class of non-separable Banach spaces (e.g. Orlicz/Lorentz spaces and weak $L_p$-spaces, $p>1$) having the BCP.
   This result is new even for classical symmetric function spaces~\cite{Bennett_S,KPS, LT2}.
   On the other hand, we show that if $\cM$ can not be represented on a separable Hilbert space, then any fully symmetric space $E(\cM,\tau)$ fails the BCP, see Propositions \ref{prop:noncou} and  \ref{prop:nonseparable} below.

\section{Preliminaries}\label{s:p}

In this section, we recall main notions of the theory of noncommutative integration, introduce some properties of generalized singular value functions.
For details on von Neumann algebra
theory, the reader is referred to e.g. \cite{Blackadar}, \cite{Dixmier}, \cite{KR}
or \cite{Tak}. General facts concerning measurable operators may
be found in \cite{Nelson}, \cite{Se,Tak2} (see also the forthcoming book \cite{DPS}).
For convenience of the reader, some of the basic definitions are recalled.

In what follows,  $\cH$ is a (not necessarily separable) Hilbert space and $\left(B(\cH),\norm{\cdot}_\infty  \right)$ is the
$*$-algebra of all bounded linear operators on $\cH$, and
$\mathbf{1}$ is the identity operator on $\cH$.
Let $\mathcal{M}$ be
a von Neumann algebra on $\cH$.
Let   $\cP\left(\mathcal{M}\right)$  be the lattice of all
projections of $\mathcal{M}$.
We denote  by $\cM_p$ the reduced von Neumann algebra $p\cM p$ generated by a projection $p\in \cP(\cM)$.

\label{s:p1}
A linear operator $x:\mathfrak{D}\left( x\right) \rightarrow \cH $,
where the domain $\mathfrak{D}\left( x\right) $ of $x$ is a linear
subspace of $\cH$, is said to be {\it affiliated} with $\mathcal{M}$
if $yx \subseteq xy$ for all $y\in \mathcal{M}^{\prime }$, where $\mathcal{M}^{\prime }$ is the commutant of $\mathcal{M}$.
A linear
operator $x:\mathfrak{D}\left( x\right) \rightarrow \cH $ is termed
{\it measurable} with respect to $\mathcal{M}$ if $x$ is closed,
densely defined, affiliated with $\mathcal{M}$ and there exists a
sequence $\left\{ p_n\right\}_{n=1}^{\infty}$ in the set  $\cP\left(\mathcal{M}\right)$ of all
projections of $\mathcal{M}$ such
that $p_n\uparrow \mathbf{1}$, $p_n(\cH)\subseteq\mathfrak{D}\left(x \right) $
and $\mathbf{1}-p_n$ is a finite projection (with respect to $\mathcal{M}$)
for all $n$.
It should be noted that the condition $p _{n}\left(
\cH\right) \subseteq \mathfrak{D}\left( x\right) $ implies that
$xp _{n}\in \mathcal{M}$. The collection of all measurable
operators with respect to $\mathcal{M}$ is denoted by $S\left(
\mathcal{M} \right) $, which is a unital $\ast $-algebra
with respect to strong sums and products (denoted simply by $x+y$ and $xy$ for all $x,y\in S\left( \mathcal{M}\right) $).

Let $x$ be a self-adjoint operator affiliated with $\mathcal{M}$.
We denote its spectral measure by $\{e^x\}$.
It is well known that if
$x$ is a closed operator affiliated with $\mathcal{M}$ with the
polar decomposition $x = u|x|$, then $u\in\mathcal{M}$ and $e\in
\mathcal{M}$ for all projections $e\in \{e^{|x|}\}$.
Moreover,
$x \in S(\mathcal{M})$ if and only if $x $ is closed,
 densely
defined, affiliated with $\mathcal{M}$ and $e^{|x|}(\lambda,
\infty)$ is a finite projection for some $\lambda> 0$.
 It follows
immediately that in the case when $\mathcal{M}$ is a von Neumann
algebra of type $III$ or a type $I$ factor, we have
$S(\mathcal{M})= \mathcal{M}$.
For type $II$ von Neumann algebras,
this is no longer true.
From now on, let $\mathcal{M}$ be a
semifinite von Neumann algebra equipped with a faithful normal
semifinite trace $\tau$.

For any closed and densely defined linear operator $x :\mathfrak{D}\left( x \right) \rightarrow \cH $,
the \emph{null projection} $n(x)=n(|x|)$ is the projection onto its kernel $\mbox{Ker} (x)$.
The \emph{left support} $l(x )$ is the projection onto the closure of its range $\mbox{Ran}(x)$ and the \emph{right support} $r(x)$ of $x$ is defined by $r(x) ={\bf{1}} - n(x)$.

An operator $x\in S\left( \mathcal{M}\right) $ is called $\tau$-measurable if there exists a sequence
$\left\{p_n\right\}_{n=1}^{\infty}$ in $\cP \left(\mathcal{M}\right)$ such that
$p_n\uparrow \mathbf{1}$, $p_n\left( \cH \right)\subseteq \mathfrak{D}\left(x \right)$ and
$\tau(\mathbf{1}-p_n)<\infty $ for all $n$.
The collection of all $\tau $-measurable
operators is a unital $\ast $-subalgebra of $S\left(
\mathcal{M}\right) $,  denoted by $S\left( \mathcal{M}, \tau\right)
$.
It is well known that a linear operator $x$ belongs to $S\left(
\mathcal{M}, \tau\right) $ if and only if $x\in S(\mathcal{M})$
and there exists $\lambda>0$ such that $\tau(e^{|x|}(\lambda,
\infty))<\infty$.
Alternatively, an unbounded operator $x$
affiliated with $\mathcal{M}$ is  $\tau$-measurable (see
\cite{FK}) if and only if
$$\tau\left(e^{|x|}(n,\infty)\right)\rightarrow 0,\quad n\to\infty.$$

\begin{definition}
Let a semifinite von Neumann  algebra $\mathcal M$ be equipped
with a faithful normal semi-finite trace $\tau$ and let $x\in
S(\mathcal{M},\tau)$. The generalised singular value function $\mu(x):t\rightarrow \mu(t;x)$, $t>0$,  of
the operator $x$ is defined by setting
$$
\mu(t;x)
=
\inf \left\{
\left\|xp\right\|_\infty :\ p\in \cP(\cM), \ \tau(\mathbf{1}-p)\leq t
\right\}.
$$
\end{definition}
An equivalent definition in terms of the
distribution function of the operator $x$ is the following. For every self-adjoint
operator $x\in S(\mathcal{M},\tau) $,  setting
$$d_x(t)=\tau(e^{x}(t,\infty)),\quad t>0,$$
we have (see e.g. \cite{FK} and \cite{LSZ})
$$
\mu(t; x)=\inf\{s\geq0:\ d_{|x|}(s)\leq t\}.
$$
Note that $d_x(\cdot)$ is a right-continuous function (see e.g.  \cite{FK} and \cite{DPS}).

Consider the algebra $\mathcal{M}=L^\infty(0,\infty)$ of all
Lebesgue measurable essentially bounded functions on $(0,\infty)$.
Algebra $\mathcal{M}$ can be viewed as an abelian von Neumann
algebra acting via multiplication on the Hilbert space
$\mathcal{H}=L^2(0,\infty)$, with the trace given by integration
with respect to Lebesgue measure $m.$
It is easy to see that the
algebra of all $\tau$-measurable operators
affiliated with $\mathcal{M}$ can be identified with
the subalgebra $S(0,\infty)$ of the algebra of Lebesgue measurable functions which consists of all functions $f$ such that
$m(\{|f|>s\})$ is finite for some $s>0$. It should also be pointed out that the
generalised singular value function $\mu(f)$ is precisely the
decreasing rearrangement $\mu(f)$ of the function $|f|$ (see e.g. \cite{KPS,Bennett_S
}) defined by
$$\mu(t;f)=\inf\{s\geq0:\ m(\{|f|\geq s\})\leq t\}.$$

For convenience of the reader,  we also recall the definition of the \emph{measure topology} $t_\tau$ on the algebra $S(\cM,\tau)$. For every $\varepsilon,\delta>0,$ we define the set
$$V(\varepsilon,\delta)=\{x\in S(\mathcal{M},\tau):\ \exists p \in \cP\left(\mathcal{M}\right)\mbox{ such that }
\left\|x(\mathbf{1}-p)\right\|_\infty \leq\varepsilon,\ \tau(p)\leq\delta\}.$$ The topology
generated by the sets $V(\varepsilon,\delta)$,
$\varepsilon,\delta>0,$ is called the measure topology $t_\tau$ on $S(\cM,\tau)$ \cite{DPS, FK, Nelson}.
It is well known that the algebra $S(\cM,\tau)$ equipped with the measure topology is a complete metrizable topological algebra \cite{Nelson}.
We note that a sequence $\{x_n\}_{n=1}^\infty\subset S(\cM,\tau)$ converges to zero with respect to measure topology $t_\tau$ if and only if $\tau\big( e  ^{|x_n|}(\varepsilon,\infty)\big)\to 0$ as $n\to \infty$ for all $\varepsilon>0$ \cite{DPS}.

The space  $S_0(\cM,\tau)$ of $\tau$-compact operators is the space associated to the algebra of functions from $S(0,\infty)$ vanishing at infinity, that is,
$$S_0(\cM,\tau) = \{x\in S(\cM,\tau) :  \ \mu(\infty; x) =0\}.$$
The two-sided ideal $\cF(\tau)$ in $\cM$ consisting of all elements of $\tau$-finite range is defined by
$$\cF(\tau)=\{x\in \cM ~:~ \tau(r(x)) <\infty\} = \{x \in \cM ~:~ \tau(s(x)) <\infty\}.$$
Note that  $S_0(\cM,\tau)$ is the closure of $\cF(\tau)$ with respect to the measure topology \cite{DP2014}.


\section{The BCP of von Neumann algebras}
In this section, we fully characterize those von Neumann algebras having the  BCP/UBCP.

Before proceeding to the main result of this section, we need several auxiliary results.

A positive linear functional $\varphi$ on a Neumann algebra $\cM$  is said to be   pure if every positive linear functional $\psi$ on $\cM,$
majorized by $\varphi$ in the sense that $\psi(x^\ast x) \le  \varphi(x^\ast x)$ for all $x\in \cM,$ is of the form $\lambda \varphi,\,  0 \le \lambda \le  1$\cite[Chapter I, Definition 9.21]{Tak}. The following proposition   is certainly known to experts.
Due to the lack of suitable references, we provide a complete proof below. 

\begin{proposition}\label{atomlessproj}
Let $\cM$ be an atomless von
Neumann algebra and let $x\in \cM.$ Then there is a singular state $\varphi \in \cM$ such that
\begin{align*}
\varphi(x^\ast x)=\left\|x^\ast x\right\|_\infty=\|x\|_\infty^2.
\end{align*}
\end{proposition}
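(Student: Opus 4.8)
The plan is to produce $\varphi$ as a weak${}^{*}$ cluster point, along a free ultrafilter on $\mathbb N$, of a suitable sequence of normal states with pairwise orthogonal supports. We may assume $\|x\|_\infty=1$ (for $x=0$ any singular state works, and such states exist by the construction below applied to a split of a nonzero projection), and we set $a:=x^{\ast}x\ge 0$, so $\|a\|_\infty=1$. The engine is the following soft fact, which I would establish first: \emph{if $(p_k)_{k\ge1}$ are pairwise orthogonal projections in a von Neumann algebra $\cN$, $(\psi_k)_{k\ge1}$ are normal states on $\cN$ with $\psi_k(p_k)=1$, and $\cU$ is a free ultrafilter on $\mathbb N$, then $\psi:=\mathrm{w}^{\ast}\text{-}\lim_{\cU}\psi_k$ is a singular state.} Indeed, $\psi$ is a state by weak${}^{*}$-compactness of the state space; write $\psi=\psi_n+\psi_s$ for its decomposition into normal and singular parts. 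For each finite $F\subseteq\mathbb N$ the projection $p_F:=\sum_{k\in F}p_k$ satisfies $\psi_j(p_F)=\mathbf 1_F(j)$ for every $j$ (as $\psi_j$ vanishes off $p_j$, which is orthogonal to $p_k$ for $k\ne j$), so $\psi(p_F)=\lim_{\cU}\mathbf 1_F(j)=0$ because $F\notin\cU$; letting $F\uparrow\mathbb N$ and using that $\psi_n$ is normal and $\psi_n\le\psi$ yields $\psi_n(\bigvee_k p_k)=0$, while $\psi_j(\mathbf 1-\bigvee_k p_k)\le\psi_j(\mathbf 1-p_j)=0$ for all $j$ yields $\psi_n(\mathbf 1-\bigvee_k p_k)=0$; hence $\psi_n=0$.

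The second step, where atomlessness enters, is the construction of the $p_k$. Put $q_n:=e^{a}(1-\tfrac1n,\infty)$; since $\|a\|_\infty=1$ each $q_n\ne 0$, the sequence decreases in $n$, and $q_\infty:=e^{a}(\{1\})=\bigwedge_n q_n$ satisfies $aq_\infty=q_\infty$. With $g_n:=q_n-q_{n+1}$, telescoping gives $q_1=q_\infty+\sum_{n\ge1}g_n$ (strong sum), so at least one of the following holds: (i) infinitely many $g_n$ are nonzero; or (ii) $q_\infty\ne 0$ --- for if both failed we would get $q_1=q_\infty+\sum_{n\le N}g_n=q_1-q_{N+1}$ for some $N$, and hence the contradiction $q_{N+1}=0$. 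In case (i), choose $n_1<n_2<\cdots$ with $g_{n_k}\ne 0$ and let $p_k:=g_{n_k}$; these are pairwise orthogonal (disjoint spectral intervals), nonzero, and $ap_k\ge(1-\tfrac1{n_k})p_k$, so put $\varepsilon_k:=\tfrac1{n_k}$. In case (ii), $q_\infty\cM q_\infty$ is again atomless, so repeatedly splitting a nonzero projection into two nonzero orthogonal pieces yields pairwise orthogonal nonzero $p_k\le q_\infty$; since $aq_\infty=q_\infty$ we have $ap_k=p_k$, so put $\varepsilon_k:=0$. In either case we have obtained pairwise orthogonal nonzero projections $p_k$ with $ap_k\ge(1-\varepsilon_k)p_k$ and $\varepsilon_k\to 0$.

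It remains to assemble the pieces. For each $k$ choose a unit vector $\xi_k$ in the range of $p_k$ and let $\varphi_k$ be the vector state $y\mapsto\langle y\xi_k,\xi_k\rangle$, which is normal and satisfies $\varphi_k(p_k)=1$; combining $a\ge(1-\varepsilon_k)p_k$ with $a\le\mathbf 1$ gives $1-\varepsilon_k\le\varphi_k(a)\le 1$. Fix a free ultrafilter $\cU$ on $\mathbb N$ and set $\varphi:=\mathrm{w}^{\ast}\text{-}\lim_{\cU}\varphi_k$. By the fact from the first paragraph $\varphi$ is a singular state, and $\varphi(x^{\ast}x)=\varphi(a)=\lim_{\cU}\varphi_k(a)=1=\|x^{\ast}x\|_\infty=\|x\|_\infty^2$, as required.

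The one genuinely delicate point is the construction of the $p_k$: a greedy recursion picking $p_k\le q_{n_k}$ orthogonal to $p_1+\cdots+p_{k-1}$ can deadlock, since a later $q_n$ may already be dominated by the projections chosen so far. The dichotomy above circumvents this --- in case (i) the mutually orthogonal spectral gaps $g_n$ do the job and atomlessness is not even needed, while in case (ii) atomlessness is exactly what lets one break the top eigenprojection $q_\infty$ into infinitely many pieces --- and everything else in the argument is routine.
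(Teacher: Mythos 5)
Your proof is correct, but it takes a genuinely different route from the paper. The paper works inside a masa $\cN$ containing $x^{*}x$: it picks a pure state $\phi$ on $\cN$ attaining $\|x^{*}x\|_\infty$, uses atomlessness plus purity to show $\phi$ kills one half of every split projection (hence is singular on $\cN$ by Takesaki's criterion), and then transfers singularity to $\cM$ by composing with the conditional expectation $\cE:\cM\to\cN$, citing that singular states pulled back through a normal conditional expectation stay singular. You avoid masas, pure states and conditional expectations altogether: your spectral dichotomy at the top of $\sigma(x^{*}x)$ either produces infinitely many orthogonal spectral gaps (where atomlessness is not even needed) or forces a nonzero top eigenprojection, which atomlessness lets you shatter into infinitely many pieces; you then take normal vector states supported on these pieces and a free-ultrafilter weak$^{*}$ limit, verifying singularity by hand from the normal--singular decomposition $\psi=\psi_n+\psi_s$ (your checks that $\psi_n$ vanishes both on $\bigvee_k p_k$ and on its complement are right, and the positivity of both parts, which you use via $\psi_n\le\psi$, is standard). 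What each approach buys: the paper's argument is shorter once one accepts the cited machinery (existence of conditional expectations onto masas, norm attainment by pure states, and the transfer of singularity through $\cE$), and its singular state is canonical given the masa; yours is more elementary and self-contained, makes the role of atomlessness completely explicit (only the point-mass-at-the-norm case needs it), and isolates a reusable lemma --- that an ultrafilter limit of normal states with pairwise orthogonal supports is singular --- which is close in spirit to how the paper later manufactures singular states in Lemma 3.3. Your handling of the degenerate case $x=0$ and the normalization $\|x\|_\infty=1$ is also fine.
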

\begin{proof}
Let $\cN$ be a masa of $\cM$  containing $x^* x$ and let $\cE $ be the conditional expectation from $\cM$ onto $\cN$ \cite[Theorem 8.3 and Lemma 8.4]{Davidson}.
By \cite[II.6.3.3]{Blackadar} (see also \cite[Theorem 4.3.8]{KR}), there exists a pure state $\phi$ on $\cN$ such that
$$\phi(x^* x)=\norm{x}_\infty^2 .$$
Let $e$ be an arbitrary projection in $\cN$.
Since $\cM$ is atomless, it follows that
there exist mutually orthogonal non-trivial  projections $e_1$ and $e_2$ in $\cN$ such that $$e_1+e_2=e . $$
We claim that either $\phi(e_1)$ or $\phi(e_2)$ is zero.
Without loss of generality, we may assume that  $\phi(e_1)>0$.
Note that $\phi(\cdot e_1)$  is  majorized by $\phi$.
Since $\phi$ is pure, it follows from the definition  that $\phi(\cdot e_1)=\lambda \varphi(\cdot)$ for some $0\le \lambda\le 1 $. On the other hand, the assumption $\phi(e_1)>0$ implies that $\lambda =1$.
Therefore, $\phi(\cdot e_1)= \varphi(\cdot)$.
In other words, $\phi(e_2)=0$.
By \cite[Chapter III, Theorem 3.8]{Tak}, $\phi$ is singular.
Define  $$\varphi(\cdot):=\phi(\cE(\cdot  )) = \phi(\cdot).$$
By \cite[III.2.1.11]{Blackadar},
 $\phi(\cE(\cdot ))$   is singular on $\cM$.
Observe that $\varphi(x^\ast x)= \phi(x^* x)=\norm{x}_\infty^2$. The proof is complete.
\end{proof}
%
%
%

The following proposition is a simple consequence of the Cauchy--Schwarz inequality\cite[Chapter I, Proposition 9.5]{Tak}.
\begin{proposition}\label{projnorm}
Let $\cM$ be a von
Neumann algebra, $x\in \cM$ and $\varphi  $ be a state on $\cM$ such that
$\varphi(x^\ast x)=\left\|x^\ast x\right\|_\infty=\left\|x\right\|_\infty^2.$ For any   $p\in \cP(\cM)$ with $\varphi(p)=1,$
we have
$$
\left\|xp\right\|_\infty =\left\|x\right\|_\infty
$$
\end{proposition}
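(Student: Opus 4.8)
The plan is to exploit the Cauchy--Schwarz inequality for the positive linear functional $\varphi$ in order to show that $\varphi$ is ``concentrated'' on the projection $p$, and then to transfer the norm identity from $x$ to $xp$. Concretely, I would first establish the general fact that if $\varphi$ is a state with $\varphi(p)=1$ for a projection $p$, then $\varphi(y) = \varphi(py) = \varphi(yp) = \varphi(pyp)$ for every $y\in\cM$. This follows by applying the Cauchy--Schwarz inequality \cite[Chapter I, Proposition 9.5]{Tak} in the form $|\varphi(a^\ast b)|^2 \le \varphi(a^\ast a)\varphi(b^\ast b)$: taking $a = \mathbf{1}-p$ and $b = y^\ast$ gives $|\varphi((\mathbf{1}-p)y^\ast)|^2 \le \varphi(\mathbf{1}-p)\varphi(yy^\ast) = 0$, so $\varphi((\mathbf{1}-p)y^\ast) = 0$, i.e. $\varphi(y^\ast) = \varphi(py^\ast)$ for all $y$; replacing $y^\ast$ by an arbitrary element and iterating on the other side yields $\varphi(y) = \varphi(pyp)$.

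Next I would apply this to $y = x^\ast x$. We get
$$
\left\|x\right\|_\infty^2 = \varphi(x^\ast x) = \varphi(p x^\ast x p) = \varphi((xp)^\ast (xp)) \le \left\|(xp)^\ast(xp)\right\|_\infty = \left\|xp\right\|_\infty^2,
$$
where the inequality is just $\varphi(z) \le \left\|z\right\|_\infty$ for the positive element $z = (xp)^\ast(xp)$ (valid since $\varphi$ is a state). On the other hand, $\left\|xp\right\|_\infty \le \left\|x\right\|_\infty \left\|p\right\|_\infty = \left\|x\right\|_\infty$ trivially because $p$ is a projection. Combining the two inequalities gives $\left\|xp\right\|_\infty = \left\|x\right\|_\infty$, which is the desired conclusion.

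The argument is essentially routine once the Cauchy--Schwarz step is in place, so there is no serious obstacle; the only point that needs a little care is the bookkeeping showing $\varphi(x^\ast x) = \varphi(p x^\ast x p)$ rather than merely $\varphi(x^\ast x) = \varphi(p x^\ast x)$ — one needs to apply the Cauchy--Schwarz estimate on both the left and the right (or note that $\varphi(px^\ast x) = \varphi(px^\ast x p)$ follows by applying the one-sided identity to the element $px^\ast x$ in place of a generic $y$). I would also remark that the hypothesis $\varphi(x^\ast x) = \left\|x\right\|_\infty^2$ is used only through the chain of inequalities above; the key structural input is really $\varphi(p) = 1$.
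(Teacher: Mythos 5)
Your argument is correct and follows essentially the same route as the paper: both use the Cauchy--Schwarz inequality with $\varphi(\mathbf{1}-p)=0$ to show $\varphi(x^\ast x)=\varphi(px^\ast xp)$, and then combine $\varphi(px^\ast xp)\le \|px^\ast xp\|_\infty=\|xp\|_\infty^2\le\|x\|_\infty^2$ with the hypothesis $\varphi(x^\ast x)=\|x\|_\infty^2$ to force equality. The only difference is presentational (you state the concentration identity $\varphi(y)=\varphi(pyp)$ as a separate step, while the paper expands $\varphi(px^\ast xp)$ directly), so there is nothing to add.
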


\begin{proof}  Let $p\in \cP(\cM)$ be such that $\varphi(p)=1.$
Let  $q:=\mathbf{1}-p.$
By the  Cauchy--Schwarz inequality\cite[Chapter I, Proposition 9.5]{Tak},  for any
$y\in \cM$,  we have
$$
|\varphi(y q)|^2\le \varphi(q)\varphi(y y^\ast)=(1-\varphi(p) )\varphi(y y^\ast) =0 .
$$
That is,
$
\varphi(y q)=0.
$
A similar argument yields that
\begin{align}\label{qqq}
\varphi(q y)=\varphi(y q)=0.
\end{align}
On the other hand, we have
\begin{align}\label{pxp}
\left\|x^\ast x \right\|_\infty & =\left\|p \right\|_\infty \left\|x^\ast x\right \|_\infty \left \|p\right\|_\infty
\ge \left\|p x^\ast x p \right\|_\infty \ge  \varphi(p x^\ast x p)\nonumber\\
& =
\varphi(x^\ast x-qx^\ast x-x^\ast x q +qx^\ast x q)\stackrel{\eqref{qqq}}=\varphi(x^\ast x)=\left\|x^\ast x\right\|_\infty.
\end{align}
Hence, we have
$$
\norm{x p}_\infty^2 =\norm{(xp)^\ast (xp)}_\infty= \norm{px^\ast x p}_\infty\stackrel{\eqref{pxp}}{=} \norm{x^\ast x}_\infty=\norm{x}_\infty^2.
$$
The proof is completed.
\end{proof}

The following lemma is the key ingredient in the proofs of Proposition \ref{prop:finite} and Theorem \ref{atomlessnotbcp} below.
\begin{lemma}\label{atomlesscenter}
Let $\cM$ be an atomless  von Neumann algebra and $\{x_n\}_{n\ge 1}$ be a sequence in $\cM.$
There exists a net $\{p_j\}_{j \in \cJ}\subset \cP(\cM)$ such that  $p_j\downarrow 0$ and $
\left\|x_np_j\right\|_\infty =\left\|x_n\right\|_\infty
$ for all $n\ge 1.$
\end{lemma}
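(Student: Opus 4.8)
The plan is to collapse the whole sequence into a single singular functional and then take the net to be the family of all projections on which that functional is ``concentrated'', ordered by reverse inclusion. First, for each $n$ Proposition \ref{atomlessproj} supplies a singular state $\varphi_n\in\cM^*$ with $\varphi_n(x_n^*x_n)=\norm{x_n}_\infty^2$. I set $\psi:=\sum_{n\ge1}2^{-n}\varphi_n$. The finite partial sums are singular, and the tails have norm at most $2^{-N}\to0$, so $\psi$ is a norm limit of singular functionals; since the singular functionals form a norm-closed subspace of $\cM^*$, $\psi$ is itself a singular state. The key bookkeeping point is that $\psi(p)=1$ forces $\varphi_n(p)=1$ for every $n$, because $\sum_n2^{-n}\varphi_n(p)=1$ with each $\varphi_n(p)\le1$.

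Next I would let $\cJ:=\{p\in\cP(\cM):\psi(p)=1\}$, directed by reverse inclusion (so $p\preceq q$ means $q\le p$). It contains $\mathbf 1$, and it is downward directed: if $\psi(p)=\psi(q)=1$ then, from the projection inequality $\mathbf 1-p\wedge q\le(\mathbf 1-p)+(\mathbf 1-q)$ and positivity of $\psi$, one gets $\psi(\mathbf 1-p\wedge q)=0$, i.e.\ $p\wedge q\in\cJ$. For every $p\in\cJ$ we have $\varphi_n(p)=1$ for all $n$, so Proposition \ref{projnorm} yields $\norm{x_np}_\infty=\norm{x_n}_\infty$ for all $n$. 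Hence the net $\{p\}_{p\in\cJ}$ (indexed by $\cJ$ itself) is decreasing and already has the required norm property; it remains only to check that it decreases to $0$.

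Finally I would show $\bigwedge_{p\in\cJ}p=0$. Suppose $e_0:=\bigwedge_{p\in\cJ}p\ne0$. Since $\psi$ is singular, the projection characterization of singular functionals \cite[Chapter III, Theorem 3.8]{Tak} gives a nonzero projection $e'\le e_0$ with $\psi(e')=0$; then $\psi(\mathbf 1-e')=1$, so $\mathbf 1-e'\in\cJ$ and therefore $e_0\le\mathbf 1-e'$. Combined with $e'\le e_0$ this forces $e'\le\mathbf 1-e'$, hence $e'=0$, a contradiction. Thus $p\downarrow0$ along $\cJ$, and the lemma follows. The load-bearing moves are Step~1 together with this last step: a naive inductive argument would only produce a decreasing sequence $e_1\ge e_2\ge\cdots$ with $\varphi_k(e_n)=0$ for $k\le n$, and in an atomless algebra such a sequence can have infimum $0$, so nothing is gained — collapsing all the $\varphi_n$ into the single singular state $\psi$ and invoking the characterization of singularity is exactly what makes the infimum argument close.
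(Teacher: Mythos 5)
Your first steps (collapsing the $\varphi_n$ into a single singular state $\psi=\sum_n 2^{-n}\varphi_n$, the observation that $\psi(p)=1$ forces $\varphi_n(p)=1$ for every $n$, the use of Proposition \ref{projnorm}, and the appeal to \cite[Chapter III, Theorem 3.8]{Tak} at the end) coincide with the paper's argument. The gap is in the construction of the net: you take $\cJ=\{p\in\cP(\cM):\psi(p)=1\}$ and claim it is downward directed via the inequality $\mathbf 1-p\wedge q\le(\mathbf 1-p)+(\mathbf 1-q)$, i.e.\ $p\vee q\le p+q$ for projections. That operator inequality is false for non-commuting projections: in $M_2(\mathbb C)$ take $e=\begin{pmatrix}1&0\\0&0\end{pmatrix}$ and $f=\frac12\begin{pmatrix}1&1\\1&1\end{pmatrix}$; then $e\vee f=\mathbf 1$ while $e+f$ has the eigenvalue $1-\tfrac1{\sqrt2}<1$. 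Worse, the conclusion itself cannot be repaired for your $\psi$: the implication $\psi(p)=\psi(q)=1\Rightarrow\psi(p\wedge q)=1$ is a statement about \emph{normal} states (where it follows from $s(\psi)\le p\wedge q$), and it genuinely fails for singular states. For instance, on $B(\ell_2)$ let $p$ project onto $\overline{\rm span}\{\xi_{2n}\}$ and $q$ onto $\overline{\rm span}\{\cos\theta_n\,\xi_{2n}+\sin\theta_n\,\xi_{2n+1}\}$ with $\theta_n\downarrow 0$, and let $\psi$ be a weak$^*$ cluster point of the vector states $\omega_{\xi_{2n}}$; then $\psi$ is singular, $\psi(p)=\psi(q)=1$, but $p\wedge q=0$. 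Such configurations also sit inside atomless algebras, so your $\cJ$ need not be directed, hence $\{p\}_{p\in\cJ}$ need not be a net and ``$p_j\downarrow 0$'' does not make sense as stated. (Your final infimum argument is fine, but it presupposes the directedness.)

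The paper circumvents exactly this lattice-theoretic obstruction by never taking meets of non-commuting projections: from the singular state $\varphi=\sum_n 2^{-n}\varphi_n$ it picks, by Zorn's lemma, a \emph{maximal orthogonal} family $\{q_i\}_{i\in I}$ of projections with $\varphi(q_i)=0$; maximality together with \cite[Chapter III, Theorem 3.8]{Tak} forces $\bigvee_i q_i=\mathbf 1$, and the net is then $p_F=\mathbf 1-\sum_{i\in F}q_i$ over finite $F\subset I$. These $p_F$ all commute (they are complements of subsums of one fixed orthogonal family), so the family is trivially decreasing, $\varphi(p_F)=1$ by finite additivity, and $p_F\downarrow\mathbf 1-\bigvee_i q_i=0$. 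If you replace your $\cJ$ by such an orthogonal-family construction, the rest of your argument goes through verbatim; as written, however, the directedness step is a genuine gap.
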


\begin{proof}
By Proposition~\ref{atomlessproj}, for  each $n\ge 1$,   there is a singular state $\varphi_n$
on $M$  that satisfies that
$$
\varphi_n(x^\ast_n x_n)=\left\|x_n\right \|_\infty^2.
$$
By \cite[Theorem 10.1.15 (ii)]{KR-II}, the functional
$$
\varphi:=\sum\limits_{n=1}^\infty \frac{1}{2^n}\varphi_n
$$
is also a  singular state on $\cM$.

Set  $$\cP(\cM)_\varphi:=\left\{p\in \cP(\cM): \varphi(p)=0\right\}.$$
By \cite[Chapter III, Theorem 3.8]{Tak},    $\cP(\cM)_\varphi$ is non-trivial.
Let $\mathcal{Q}$  be the system of all sets (finite or infinite)
of all pairwise orthogonal projections from $\cP(\cM)_\varphi$,
ordered by set inclusion.
  By Zorn's  Lemma, there exists a maximal element $\{q_i\}_{i\in I}$
in $\mathcal{Q}.$
We claim that $\bigvee\limits_{i\in I}q_i=\mathbf{1}.$
Assume by contradiction that
$\bigvee\limits_{i\in I}q_i\neq \mathbf{1}.$
By \cite[Chapter III, Theorem 3.8]{Tak},
there exists a non-zero projection $q\le \mathbf{1}-\bigvee\limits_{i\in I}q_i$
such that $\phi(q)=0.$
We have
$$
\{q_i\}_{i\in I} \subsetneqq
\{q\}\cup \{q_i\}_{i\in I}\in \mathcal{Q},
$$
which contradicts with the maximality of $\{q_i\}_{i\in I}.$ Hence,
$\bigvee\limits_{i\in I}q_i=\mathbf{1}.$

Let $\mathcal{F}$ be the system of all finite subsets of $I$ (ordered by inclusion). 
For each $F\in \mathcal{F}$, we define
$$p_F: =\mathbf{1}-\sum\limits_{i\in F} q_i.$$ We have
$\phi(p_F)=1$ for each  $F\in \mathcal{F}$ and $p_F \downarrow 0.$
In particular,
 $\varphi_n(p_F) = 1$ for all  $n \ge  1$ and $F\in \cF$.
By Proposition~\ref{projnorm}, we obtain that 
$\left\|x_np_F \right\|_\infty =\left\|x_n\right\|_\infty
$ for every $n\ge 1$ and $F\in \cF$.
\end{proof}

\begin{proposition}\label{prop:finite}
Let $\cM$ be an atomless von Neumann algebra equipped with a faithful normal state $\omega$.
Then, for any sequence $(y_n)_{n=1}^\infty$ in $\cM$ and $\varepsilon>0$, there exists an element $g\in \cM$ such that 
$$\norm{g-y_n}_\infty >\norm{y_n}_\infty +1-\varepsilon.$$
\end{proposition}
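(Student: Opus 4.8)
The plan is to use Lemma~\ref{atomlesscenter} to produce a single projection that simultaneously ``captures the norm'' of all the $y_n$, and then to exhibit $g$ as a suitable multiple of a partial isometry supported on a small orthogonal complement. First I would apply Lemma~\ref{atomlesscenter} to the sequence $(y_n)_{n\ge1}$ to obtain a net $\{p_j\}_{j\in\cJ}$ with $p_j\downarrow 0$ and $\norm{y_n p_j}_\infty=\norm{y_n}_\infty$ for all $n$. Since $\omega$ is a faithful normal state, $\omega(p_j)\downarrow 0$, so I can fix an index $j$ with $\omega(p_j)$ as small as I like; write $p:=p_j$ and $q:=\mathbf{1}-p$, noting $\norm{y_n q}_\infty=\norm{y_n}_\infty$ for every $n$ (here I should double-check whether it is $\norm{y_n p}$ or $\norm{y_n q}$ that Lemma~\ref{atomlesscenter} controls, and take the correct one; the lemma as stated gives $\norm{x_n p_j}_\infty=\norm{x_n}_\infty$, so I work with $p$ directly and reserve $q$ for the complement).

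Next I would build $g$ so that it is large on the part of the Hilbert space where $p$ is small. Since $\cM$ is atomless, inside $\cM_p=p\cM p$ there is a non-zero partial isometry $v$ (indeed one may take $v$ with $v^\ast v=p$ after shrinking, or simply any non-zero partial isometry $v\in \cM$ with $v^\ast v\le p$, $vv^\ast\le p$), and I set
$$
g:=\Big(\sup_n \norm{y_n}_\infty + 2\Big)\, v.
$$
Then for each $n$, using that multiplication by the projection $p$ does not increase the $\infty$-norm and that $y_n p$ and $v$ have a controlled interaction, I estimate
$$
\norm{g-y_n}_\infty \ge \norm{(g-y_n)p}_\infty \ge \norm{g}_\infty\,\norm{vp}_\infty - \norm{y_n p}_\infty,
$$
or more carefully I compress by $v^\ast$ on the left: $\norm{g-y_n}_\infty\ge \norm{v^\ast(g-y_n)}_\infty \ge \norm{v^\ast g}_\infty - \norm{v^\ast y_n}_\infty = (\sup_m\norm{y_m}_\infty+2) - \norm{y_n}_\infty \ge \norm{y_n}_\infty + 1-\varepsilon$ provided the supremum of the $\norm{y_m}_\infty$ is finite.

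The main obstacle is that the $y_n$ need not be uniformly bounded, so $\sup_n\norm{y_n}_\infty$ may be $+\infty$ and the single-element construction above collapses. To handle this I would instead argue locally at each $n$: Lemma~\ref{atomlesscenter} gives the \emph{same} net $\{p_j\}$ working for all $n$ at once, but the element $g$ I need must beat $\norm{y_n}_\infty+1-\varepsilon$ \emph{for every} $n$, which with unbounded norms forces $\norm{g}_\infty=\infty$ — impossible for $g\in\cM$. So the statement must be read as: for the \emph{given} sequence (which, being a sequence in $\cM$, consists of bounded operators but need not be norm-bounded), one still finds $g\in\cM$; this can only succeed if in fact $\sup_n\norm{y_n}_\infty<\infty$, \emph{or} if the intended reading is that $g$ may be chosen depending on finitely many indices. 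I would resolve this by first reducing to the bounded case: replace $y_n$ by $y_n/\max(1,\norm{y_n}_\infty)$? No — that changes the inequality. The clean fix is to observe that it suffices to prove the claim when $\sup_n\norm{y_n}_\infty=:R<\infty$ (the unbounded case being vacuous or handled by passing to the relevant bounded subfamily needed for the application in Theorem~\ref{atomlessnotbcp}), and then the construction with $g=(R+2)v$ and compression by $v^\ast$ goes through, using faithfulness of $\omega$ only to guarantee $p\neq 0$ so that a non-zero partial isometry $v$ with support under $p$ exists. I expect the write-up to spend most of its effort pinning down this boundedness issue and verifying the partial-isometry compression estimate; the rest is a direct application of Lemma~\ref{atomlesscenter}.
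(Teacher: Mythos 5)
There is a genuine gap, and it is exactly at the point you flagged but then waved away: the unbounded case. Your dismissal rests on a false claim — the inequality $\norm{g-y_n}_\infty>\norm{y_n}_\infty+1-\varepsilon$ does \emph{not} force $\norm{g}_\infty\ge\sup_n\norm{y_n}_\infty$; by the triangle inequality it only forces $\norm{g}_\infty>1-\varepsilon$, since $g$ can sit at distance roughly $1+\norm{y_n}_\infty$ from $y_n$ by being ``anti-aligned'' with $y_n$ rather than by being large. So the unbounded case is neither vacuous nor impossible, and it cannot be reduced to a bounded subfamily: in the application (Theorem \ref{atomlessnotbcp}) the $y_n$ are the centers of a countable ball covering of the unit sphere, and, as the introduction notes, BCP centers need not form a bounded sequence — one must defeat \emph{all} of them with a single $g$. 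The paper's proof does this by a block construction: apply Lemma \ref{atomlesscenter} to $(y_n)$ to get a net $p_j\downarrow0$ with $\norm{y_np_j}_\infty=\norm{y_n}_\infty$, extract a decreasing sequence $(p_n)$ with $\norm{y_n(p_n-p_{n+1})}_\infty>\norm{y_n}_\infty-\varepsilon/2$, apply the lemma again on the left to get $(q_n)$ with $\norm{(q_n-q_{n+1})y_n(p_n-p_{n+1})}_\infty>\norm{y_n}_\infty-\varepsilon$, take polar decompositions $(q_n-q_{n+1})y_n(p_n-p_{n+1})=u_n|(q_n-q_{n+1})y_n(p_n-p_{n+1})|$, and set
\begin{equation*}
g:=-\sum_{n\ge1}(q_n-q_{n+1})u_n(p_n-p_{n+1}).
\end{equation*}
Because the blocks are pairwise orthogonal on both sides, $\norm{g}_\infty\le1$; yet compressing $g-y_n$ by $q_n-q_{n+1}$ and $p_n-p_{n+1}$ yields $u_n\bigl((p_n-p_{n+1})+|(q_n-q_{n+1})y_n(p_n-p_{n+1})|\bigr)$, whose norm exceeds $1+\norm{y_n}_\infty-\varepsilon$. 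This simultaneous anti-alignment on disjoint blocks is the idea your proposal is missing.

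Two secondary points. First, even in the bounded case your estimate does not close as written: with $g=(R+2)v$ you get $\norm{g-y_n}_\infty\ge(R+2)-\norm{y_n}_\infty$, and $(R+2)-\norm{y_n}_\infty\ge\norm{y_n}_\infty+1-\varepsilon$ fails when $\norm{y_n}_\infty$ is close to $R$ and $R$ is large; you would need, say, $g=(2R+2)v$. Second, once that constant is fixed, your bounded-case argument uses neither atomlessness, nor the faithful normal state, nor the norm-capturing property of Lemma \ref{atomlesscenter} (any nonzero $v$, even $v=\mathbf{1}$, would do), which signals that it bypasses rather than proves the actual content of the proposition.
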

\begin{proof}
By Proposition \ref{atomlesscenter}, there exists a net $\{p_i\}_{i\in\cI}$ in $\cP(\cM)$ such that $p_i\downarrow 0$ and $\norm{y_np_i}_\infty =\norm{y_n}_\infty$ for all $n\ge 1$.
Let  $p_1 $ be an arbitrary projection in $  \{p_i\}_{i\in\cI}$.
Since $p_i\downarrow 0$, it follows that there exists $p_2$ in $  \{p_i\}_{i\in\cI}$ with $p_2\le p_1$ such that
$$\norm{y_1 (p_1-p_2)}_\infty >\norm{y_1}_\infty -\varepsilon/2. $$
Continue this process, we obtain a decreasing sequence $\{p_n\}_{n\ge 1}\subset\{p_i\}_{i\in\cI} $ such that
$$\norm{y_n (p_n-p_{n+1})}_\infty >\norm{y_n}_\infty -\varepsilon/2. $$
By Proposition \ref{atomlesscenter} again, there exists a net $\{q_j\}_{j\in\cJ}$ in $\cP(\cM)$ such that $q_j\downarrow 0$ and $\norm{q_jy_n (p_n-p_{n+1})}_\infty =\norm{y_n (p_n-p_{n+1})}_\infty$ for all $n\ge 1$.
Arguing similarly, we   obtain a decreasing sequence $\{q_n\}_{n\ge 1}\subset\{q_j\}_{j\in\cJ} $ such that
$$\norm{(q_n-q_{n+1})y_n (p_n-p_{n+1})}_\infty >\norm{y_n}_\infty -\varepsilon. $$

Let $(q_n-q_{n+1})y_n (p_n-p_{n+1}) =u_n |(q_n-q_{n+1})y_n (p_n-p_{n+1}) |$ be the polar decomposition.
We define
$$g:= - \sum_{n\ge 1} (q_n-q_{n+1}) u_n (p_n-p_{n+1}). $$
We have
\begin{align*}
\norm{g-y_n}_\infty& \ge \norm{(q_n-q_{n+1})g (p_n-p_{n+1}) -(q_n-q_{n+1})y_n (p_n-p_{n+1})   }_\infty\\
& =\norm{   -(q_n-q_{n+1})  u_n(p_n-p_{n+1})-(q_n-q_{n+1})y_n (p_n-p_{n+1})  }_\infty  \\
&=\norm{u_n((p_n-p_{n+1})+ |(q_n-q_{n+1})y_n (p_n-p_{n+1}) |) }_\infty \\
&>\norm{y_n}_\infty +1- \varepsilon.
\end{align*}
This completes the proof.
\end{proof}

Now, we are ready to prove the main result of this section, which shows that all atomless von Neumann algebras fail the BCP.
\begin{theorem}\label{atomlessnotbcp}
Let $\cM$ be a non-trivial   atomless von Neumann algebra.
Then, for any sequence $(y_n)_{n=1}^\infty$ in $\cM$ and $\varepsilon>0$, there exists an element $g\in \cM$ such that 
$$\norm{g-y_n}_\infty >\norm{y_n}_\infty +1-\varepsilon.$$
In particular, any  non-trivial   atomless   von
Neumann algebra  fails the  BCP.
\end{theorem}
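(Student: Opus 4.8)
The plan is to reduce the general atomless case to the situation of Proposition~\ref{prop:finite}, where the algebra carries a faithful normal state. The statement for general $\cM$ differs from Proposition~\ref{prop:finite} only in that $\cM$ need not be $\sigma$-finite (it need not admit a faithful normal state). So the first step is to localize: given the sequence $(y_n)_{n\ge 1}$, choose a single projection $e\in\cP(\cM)$ that is $\sigma$-finite (for instance, pick a unit vector $\xi$ with $\|y_1\xi\|$ close to $\|y_1\|_\infty$, and more generally vectors $\xi_n$ with $\|y_n\xi_n\|>\|y_n\|_\infty-1/n$, and let $e$ be the projection onto the closed span of $\cM'\xi_n$ over all $n$; this $e$ is $\sigma$-finite because it is generated by countably many vectors, and $ey_ne$ — or rather the compression — still nearly attains the norm of $y_n$). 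However, compressing to $e\cM e$ changes the norms $\|y_n\|_\infty$, so instead I would keep the norms intact by working with $p\cM p$ only after verifying $\|py_np\|_\infty$ is close to $\|y_n\|_\infty$; alternatively, and more cleanly, observe that it suffices to produce $g\in\cM$ with the stated property, and the construction in Proposition~\ref{prop:finite} only ever used projections below some initial $p_1$ from the net of Lemma~\ref{atomlesscenter}.

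Concretely, here is the cleaner route. Run Lemma~\ref{atomlesscenter} directly on $\cM$ and the sequence $(y_n)_{n\ge1}$: this produces a net $\{p_i\}_{i\in\cI}\subset\cP(\cM)$ with $p_i\downarrow 0$ and $\|y_np_i\|_\infty=\|y_n\|_\infty$ for all $n$. Fix any $p_1$ in this net. Because $p_i\downarrow 0$ and $\|y_1p_1\|_\infty=\|y_1\|_\infty$, there is $p_2\le p_1$ in the net with $\|y_1(p_1-p_2)\|_\infty>\|y_1\|_\infty-\varepsilon/2$; iterating gives a decreasing sequence $\{p_n\}\subset\{p_i\}$ with $\|y_n(p_n-p_{n+1})\|_\infty>\|y_n\|_\infty-\varepsilon/2$. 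Apply Lemma~\ref{atomlesscenter} a second time to the sequence $\{y_n(p_n-p_{n+1})\}_n$ to get a net $\{q_j\}_{j\in\cJ}\subset\cP(\cM)$, $q_j\downarrow0$, with $\|q_jy_n(p_n-p_{n+1})\|_\infty=\|y_n(p_n-p_{n+1})\|_\infty$; thinning as before yields a decreasing sequence $\{q_n\}$ with $\|(q_n-q_{n+1})y_n(p_n-p_{n+1})\|_\infty>\|y_n\|_\infty-\varepsilon$. Then set, with $u_n$ the partial isometry from the polar decomposition of $(q_n-q_{n+1})y_n(p_n-p_{n+1})$,
$$g:=-\sum_{n\ge1}(q_n-q_{n+1})\,u_n\,(p_n-p_{n+1}),$$
which converges in the strong operator topology and lies in $\cM$ because the summands have pairwise orthogonal left supports $\le q_n-q_{n+1}$ and pairwise orthogonal right supports $\le p_n-p_{n+1}$, hence the partial sums are uniformly bounded by $1$. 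The same three-line estimate as in Proposition~\ref{prop:finite} — compressing $g-y_n$ between $q_n-q_{n+1}$ on the left and $p_n-p_{n+1}$ on the right, and noting the two pieces $u_n(p_n-p_{n+1})$ and $(q_n-q_{n+1})y_n(p_n-p_{n+1})=u_n|(q_n-q_{n+1})y_n(p_n-p_{n+1})|$ add up as $u_n\bigl((p_n-p_{n+1})+|\cdots|\bigr)$ with both summands positive — gives $\|g-y_n\|_\infty>\|y_n\|_\infty+1-\varepsilon$.

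The main obstacle is purely one of bookkeeping: verifying that Lemma~\ref{atomlesscenter}, whose proof used $\sigma$-finiteness implicitly only through the construction of a single singular state $\varphi=\sum 2^{-n}\varphi_n$, actually goes through verbatim for non-$\sigma$-finite $\cM$. Inspecting that proof, the only inputs are Proposition~\ref{atomlessproj} (which needs no $\sigma$-finiteness — it uses a masa containing $x_n^*x_n$ and the Gelfand theory of a commutative $C^*$-algebra), the fact that a countable convex combination of singular states is singular \cite[Theorem 10.1.15(ii)]{KR-II}, and a Zorn's-lemma exhaustion of $\mathbf1$ by projections on which a fixed singular state vanishes — all valid in complete generality. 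Hence Lemma~\ref{atomlesscenter} is available as stated, and the proof of Proposition~\ref{prop:finite} transcribes word for word after deleting the hypothesis ``equipped with a faithful normal state $\omega$,'' which was never used. The final sentence, that $\cM$ fails the BCP, is then immediate: if $\{B(y_n,r_n)\}_{n\ge1}$ were balls off the origin covering the unit sphere, rescaling we may take each $r_n<1$ (a ball off the origin covering part of the sphere has radius $<\|y_n\|_\infty$, but to be safe one applies the displayed inequality), and the element $g/\|g\|_\infty$ on the unit sphere satisfies $\|g/\|g\|_\infty-y_n\|_\infty\ge\|g-y_n\|_\infty-|\,\|g\|_\infty-1\,|$; a short computation using $\|g\|_\infty\le 1$ and $\|g-y_n\|_\infty>\|y_n\|_\infty+1-\varepsilon\ge 1-\varepsilon$ shows this exceeds $r_n$ for suitable $\varepsilon$, so $g/\|g\|_\infty$ lies in none of the balls — contradiction. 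I would present this last deduction carefully since the BCP points need not be bounded, but the inequality $\|g-y_n\|_\infty>\|y_n\|_\infty+1-\varepsilon$ is exactly strong enough to handle unbounded centers.
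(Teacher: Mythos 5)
Your proposal is correct, and it handles the one step that the paper's proof of Theorem~\ref{atomlessnotbcp} actually consists of --- the passage from the $\sigma$-finite situation of Proposition~\ref{prop:finite} to a general atomless $\cM$ --- by a genuinely different route. The paper localizes: it takes a semifinite faithful normal weight $\omega$, chooses for each $n$ nonzero projections $p_n\le e^{|y_n|}(\norm{y_n}_\infty-\varepsilon,\norm{y_n}_\infty]$ and $q_n\le e^{|y_n^*|}(\norm{y_n}_\infty-\varepsilon,\norm{y_n}_\infty]$ of finite weight, forms the $\sigma$-finite projection $Q=(\vee_n p_n)\vee(\vee_n q_n)$, uses $\norm{y_n-g}_\infty\ge\norm{Q(y_n-g)Q}_\infty$ together with $\norm{Qy_nQ}_\infty\ge\norm{y_n}_\infty-\varepsilon$, and then applies Proposition~\ref{prop:finite} inside the $\sigma$-finite corner $\cM_Q$, which does carry a faithful normal state. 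You instead observe that the faithful normal state in Proposition~\ref{prop:finite} is never invoked in its proof: Lemma~\ref{atomlesscenter} is stated and proved for arbitrary atomless algebras (its only inputs being Proposition~\ref{atomlessproj}, the fact that countable convex combinations of singular states are singular, and a Zorn exhaustion of $\mathbf 1$ by projections annihilated by a fixed singular state), so the two-net thinning and the orthogonal partial-isometry sum $g=-\sum_n(q_n-q_{n+1})u_n(p_n-p_{n+1})$ can be run directly in $\cM$. Your audit of where $\sigma$-finiteness could have entered is accurate, so this shortcut is legitimate; it yields a shorter proof and makes explicit that the state hypothesis in Proposition~\ref{prop:finite} is redundant, whereas the paper's compression lets Proposition~\ref{prop:finite} be quoted exactly as stated at the cost of the spectral-projection bookkeeping for $Q$. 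Your normalization argument at the end (passing to $g/\norm{g}_\infty$ and using $\norm{g}_\infty>1-\varepsilon$, which follows from the displayed inequality for a single $n$) is fine and indeed more explicit than the paper, which leaves the ``in particular'' deduction implicit. One cosmetic caveat, which you share with the paper's own argument: for indices with $\norm{y_n}_\infty\le\varepsilon$ the compressed operator $(q_n-q_{n+1})y_n(p_n-p_{n+1})$ may vanish and the corner estimate degenerates, so such $n$ should be handled separately (e.g.\ via $\norm{g}_\infty=1$) or $\varepsilon$ taken small relative to a declared lower bound; this is routine and does not affect the substance.
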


\begin{proof}Let $\omega$ be a semifinite faithful normal weight on $\cM$.
For each $n\ge 1$,  we may choose  a non-trivial projection  $p_n\in \cM$   such that $p_n\le e^{|y_n |}(\norm{y_n}_\infty-\varepsilon,\norm{y_n}_\infty]$ with $\omega(p_n)<\infty$.
On the other hand, we may choose  a non-trivial  projection  $q_n\in \cM$   such that $q_n\le e^{|y_n^*|}(\norm{y_n}_\infty-\varepsilon,\norm{y_n}_\infty]$ with $\omega(q_n)<\infty$.
In particular, $p_n$ and $q_n$ are
$\sigma$-finite and $$Q:=(\vee_{n\ge 1}p_n)\vee (\vee_{n\ge 1}q_n)$$ is also $\sigma$-finite (see e.g. \cite[Proposition 5.5.9 and 5.5.19]{KR}).
Note that $$\norm{y_n- g }_\infty \ge \norm{Q(y_n -g)Q }_\infty $$
for any $g\in \cM$ (in particular, for any $g\in \cM_Q=Q\cM Q$). It is sufficient to prove that there exists $g\in \cM_Q $ such that \begin{align}\label{finite}
\norm{Qy_n Q -g  }_\infty > \norm{Q y_n Q}_\infty +1-\varepsilon\ge  \norm{y_n }_\infty +1-2\varepsilon .
\end{align}
Therefore, without loss of generality, we may assume that $\cM$ is a $\sigma$-finite Neumann algebra  equipped with a faithful normal state\cite[Ex. 7.6.46]{KR-II}.
By Proposition~\ref{prop:finite}, we obtain the existence of $g\in  \cM_Q$ in
Eq. \eqref{finite}.
This competes the proof.
\end{proof}

Let $\cM$ be a von Neumann algebra equipped with a   semifinite faithful normal trace $\tau$.
The so-call $\tau$-compact ideal $C_0(\cM,\tau)$  is the uniform norm closure of
$\cF(\tau)$ in $\cM$~\cite[Definition 2.6.8]{LSZ}. Equivalently, $$C_0(\cM,\tau)=S_0(\cM,\tau)\cap \cM$$
see e.g. \cite[Lemma 2.6.9]{LSZ}, \cite{DP2014}.
The   ideal  $\cJ(\cM)$ of compact operators (i.e., the uniform norm closed two sided ideal   generated by the finite projections of a semifinite von Neumann algebra)  was introduced by Kaftal~\cite{Kaftal}.
Both $C_0(\cM,\tau)$ and $\cJ(\cM)$ are natural generalization of the ideal $K(\cH)$ of all compact operators on a separable Hilbert space $\cH$.
Whenever $\cM$ is a factor, we have $C_0(\cM,\tau)=\cJ(\cM)$, see e.g. \cite[Remark 2.7 and Theorem 2.8]{BHLS} and \cite[Theorem 1.3]{Kaftal}.
In particular, when $\cM=B(\cH)$, we have
$$C_0(\cM,\tau)=\cJ(\cM)=K(\cH). $$

Proposition \ref{prop:finite} shows that any von Neumann algebra equipped with finite faithful normal trace fails the BCP.
Arguing similar to   the proof of Theorem \ref{atomlessnotbcp}, we obtain the following corollary. 

 \begin{cor}\label{th:atomless}
Let $\cM$ be an atomless von Neumann algebra equipped with a semifinite faithful normal trace $\tau$.
For  any sequence $(y_n)_{n=1}^\infty$ in $\cM$ and $\varepsilon>0$, there exists an element $g\in  \cF(\tau)$ such that 
$$\norm{g-y_n}_\infty >\norm{y_n}_\infty +1-\varepsilon.$$
Consequencely, none of the spaces  $$C_0(\cM,\tau) ,  ~\cJ(\cM)  \mbox{ and } \cM$$    has the BCP.
\end{cor}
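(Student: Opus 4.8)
The plan is to mimic the proof of Theorem~\ref{atomlessnotbcp}, but now keeping track of the trace so that the perturbing element $g$ lands in $\cF(\tau)$. First I would recall that in Proposition~\ref{prop:finite} the element $g$ was constructed as $g = -\sum_{n\ge 1}(q_n-q_{n+1})u_n(p_n-p_{n+1})$, a strongly convergent sum of ``matrix units'' sandwiched between mutually orthogonal pieces $q_n-q_{n+1}$ on the left and $p_n-p_{n+1}$ on the right. The key observation is that in the setting of Theorem~\ref{atomlessnotbcp} the relevant projections $Q=(\vee_n p_n)\vee(\vee_n q_n)$ can be taken $\sigma$-finite by choosing the spectral projections $p_n\le e^{|y_n|}(\norm{y_n}_\infty-\varepsilon,\norm{y_n}_\infty]$ and $q_n\le e^{|y_n^*|}(\cdots)$ to have \emph{finite} trace, which is possible precisely because $\cM$ is atomless (hence has no minimal projections, so every nonzero spectral projection dominates a nonzero subprojection of arbitrarily small, in particular finite, trace), rather than merely $\sigma$-finite.

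Concretely, I would first reduce as in Theorem~\ref{atomlessnotbcp} to the compressed algebra $\cM_Q$, where $Q$ now has \emph{finite} trace and $\cM_Q$ carries the faithful normal finite trace $\tau|_{\cM_Q}$ (after normalisation, a faithful normal state). Running the two applications of Lemma~\ref{atomlesscenter} inside $\cM_Q$ — first to split off $p_n-p_{n+1}$ realising $\norm{y_n}_\infty-\varepsilon/2$, then to split off $q_n-q_{n+1}$ on the left realising $\norm{y_n}_\infty-\varepsilon$ — produces exactly the same $g\in\cM_Q$ as in Proposition~\ref{prop:finite}, with the same lower bound $\norm{g-y_n}_\infty>\norm{y_n}_\infty+1-\varepsilon$ (absorbing the factor-of-two into a relabelling of $\varepsilon$, exactly as in Eq.~\eqref{finite}). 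Since $g\in\cM_Q=Q\cM Q$ and $\tau(Q)<\infty$, we automatically have $\tau(r(g))\le\tau(Q)<\infty$, so $g\in\cF(\tau)$, which is the improvement over Theorem~\ref{atomlessnotbcp}. For the finite-trace case there is nothing extra to do: $\cM$ itself already plays the role of $\cM_Q$ and $\cF(\tau)=\cM$.

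For the final ``Consequently'' clause I would argue that the failure of the BCP for all three spaces follows at once from the displayed inequality. Indeed, suppose $\{B(y_n,r_n)\}_{n\ge 1}$ were a countable family of balls off the origin covering the unit sphere of one of $C_0(\cM,\tau)$, $\cJ(\cM)$, or $\cM$; without loss of generality $r_n<\norm{y_n}_\infty$ for every $n$ (this is what ``off the origin'' means). Apply the inequality to the sequence $(y_n)_{n=1}^\infty$ to get $g\in\cF(\tau)$ with $\norm{g-y_n}_\infty>\norm{y_n}_\infty+1-\varepsilon>r_n$ for all $n$, once $\varepsilon$ is chosen small enough (say $\varepsilon<1$; then $\norm{y_n}_\infty+1-\varepsilon>\norm{y_n}_\infty>r_n$). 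Note that $\cF(\tau)\subseteq C_0(\cM,\tau)\subseteq\cJ(\cM)\subseteq\cM$, so $g$ lies in each of these spaces; normalising $h:=g/\norm{g}_\infty$ (if $g\ne0$; and $g\ne0$ is clear from the inequality applied to, say, the constant sequence $y_n\equiv 0$, or simply because $\norm{g-y_n}_\infty>1-\varepsilon>0$) gives a unit-sphere element, but $h$ need not avoid the balls. The cleaner route, which I would actually adopt, is the standard one: a ball $B(c,r)$ with $r<\norm{c}_\infty$ misses the unit sphere entirely unless it meets it, and one checks directly that if $\norm{z}_\infty=1$ lies in $B(y_n,r_n)$ then $\norm{y_n}_\infty\le 1+r_n<1+\norm{y_n}_\infty$, no contradiction — so instead I show that the element $g/\norm{g}_\infty$ constructed from a \emph{rescaled} sequence $\tilde y_n:=(1+r_n)y_n/\norm{y_n}_\infty$ (unit-scaled centres pushed out to radius $1+r_n$) is a unit vector outside every $B(y_n,r_n)$, exactly the contradiction used in \cite[Theorem~3.5]{LZ2020} and \cite{LLLZ}. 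The only genuine obstacle is the bookkeeping in this last step — making sure the rescaling keeps the centres inside the algebra (it does: each space is a vector space) and that $g$ stays in $\cF(\tau)$ (it does, by the support bound above) — everything else is a direct quotation of Proposition~\ref{prop:finite} and the $\sigma$-finiteness/finite-trace reduction of Theorem~\ref{atomlessnotbcp}.
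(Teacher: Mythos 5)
Your construction of $g\in\cF(\tau)$ is essentially the paper's intended argument: the paper only says ``arguing similar to the proof of Theorem~\ref{atomlessnotbcp}'' (and in the subsequent remark attributes the corollary to Theorem~\ref{atomlessnotbcp} together with \eqref{Vaes}), and your adaptation --- choose the spectral subprojections $p_n,q_n$ of finite trace so that the corner $Q$ becomes $\tau$-finite and hence $g\in Q\cM Q$ has $\tau(r(g))\le\tau(Q)<\infty$ --- is exactly the right way to upgrade $g$ from $\cM_Q$ to $\cF(\tau)$. One point needs tightening: taking each $\tau(p_n),\tau(q_n)$ merely finite makes $Q=(\vee_n p_n)\vee(\vee_n q_n)$ only $\sigma$-finite, not $\tau$-finite. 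You must invoke the ``arbitrarily small'' part of your own remark and take, say, $\tau(p_n),\tau(q_n)\le 2^{-n}$, so that $\tau(Q)\le\sum_n\bigl(\tau(p_n)+\tau(q_n)\bigr)<\infty$. With that one-line fix this half of the proposal matches the paper.

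The genuine gap is in the ``Consequently'' step, where your write-up goes astray twice. First, your reason for discarding the direct normalization is incorrect: the element $g$ produced by Proposition~\ref{prop:finite} is (minus) a sum of partial isometries with mutually orthogonal left supports and mutually orthogonal right supports, so $\norm{g}_\infty\le 1$, while the displayed inequality forces $\norm{g}_\infty>1-\varepsilon$; hence $h:=g/\norm{g}_\infty$ satisfies $\norm{h-g}_\infty=1-\norm{g}_\infty<\varepsilon$ and therefore $\norm{h-y_n}_\infty>\norm{y_n}_\infty+1-2\varepsilon>r_n$ for all $n$ once $\varepsilon\le 1/2$. Since $g\in\cF(\tau)\subset C_0(\cM,\tau)\subset\cJ(\cM)\subset\cM$, this $h$ is a unit-sphere element of whichever of the three spaces carries the putative covering, and it misses every ball off the origin --- the required contradiction. (Note that some bound like $\norm{g}_\infty\le 1$ is genuinely needed: the displayed inequality alone, for an element of uncontrolled norm, does not formally rule out a ball covering, so this observation cannot be skipped in either route.) Second, the route you say you would actually adopt --- applying the inequality to the rescaled centres $\tilde y_n=(1+r_n)y_n/\norm{y_n}_\infty$ --- is asserted rather than proved: the estimate $\norm{h-y_n}_\infty\ge\norm{g-\tilde y_n}_\infty-\norm{\tilde y_n-y_n}_\infty-\norm{h-g}_\infty$ collapses when $\norm{y_n}_\infty>1+r_n$ (there one must fall back on the trivial bound $\norm{h-y_n}_\infty\ge\norm{y_n}_\infty-1>r_n$), and in the remaining case it still uses the very bound $\norm{g}_\infty\le1$ you never record. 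So the rescaling buys nothing over the normalization you rejected, and as written the final step of your proposal is incomplete; with the observation $\norm{g}_\infty\le1$ the simple normalization already finishes the proof.
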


\begin{remark}
Ozawa  and Vaes  independently  showed\cite{OV} that  for a sequence $\{x_n\}_{n\ge 1}$ of elements
in an atomless von Neumann algebra $\cM$ equipped with a semifinite faithful normal weight $\omega$, for any $\varepsilon>0$, there exists a projection $p$ such that $\omega(p)<\varepsilon$ and
\begin{align}\label{Vaes}\norm{x_n p}_\infty =\norm{x_n}_\infty
\end{align}
for all $n\ge 1$.
In particular, there exists a non-trivial projection $p$ such that
$$\norm{x_n-({\bf 1}-p)}_\infty \ge \norm{(x_n-({\bf 1}-p) ) p }_\infty  =\norm{x_np}_\infty =\norm{x_n}_\infty,$$
which shows that $\cM$ fails the BCP.
 Our proof  for Theorem \ref{atomlessnotbcp} is motivated by Ozawa's argument.
Corollary \ref{th:atomless} is a consequence of Theorem \ref{atomlessnotbcp}  and \eqref{Vaes}.
\end{remark}

The following theorem    characterizes those von Neumann algebras having the BCP.

\begin{theorem}\label{main1}
A   von Neumann algebra  $\cM$ has the BCP/UBCP if and only if
\begin{center}
  $\cM$ is an atomic  and $\cM_*$ is separable.
\end{center}
\end{theorem}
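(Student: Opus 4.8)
The plan is to first recast the condition structurally. Recall that an atomic von Neumann algebra is exactly a direct sum $\cM\cong(\bigoplus_{i\in I}B(\cH_i))_{\ell_\infty}$ of type $I$ factors, and that for such $\cM$ one has $\cM_*\cong(\bigoplus_{i\in I}S_1(\cH_i))_{\ell_1}$, which is norm separable if and only if $I$ is countable and every $\cH_i$ is separable. Hence the theorem is equivalent to: $\cM$ has the BCP/UBCP if and only if $\cM\cong(\bigoplus_{n}B(\cH_n))_{\ell_\infty}$ with at most countably many summands, each $\cH_n$ separable. Since the UBCP trivially implies the BCP, it suffices to prove that this structural condition implies the UBCP and, conversely, that the BCP implies the structural condition.

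For the implication ``BCP $\Rightarrow$ structure'', suppose $\cM$ has the BCP. First, $\cM$ must be atomic: write $\cM=\cM_a\oplus_{\ell_\infty}\cM_c$ with $\cM_c$ the atomless part (the range of a central projection). If $\cM_c\neq0$ then $\cM_c$ fails the BCP by Theorem~\ref{atomlessnotbcp}; but the BCP passes to $\ell_\infty$-direct summands, since if $\{B((a_n,b_n),r_n)\}_n$ is a covering of $S_\cM$ by balls off the origin, then each $a\in S_{\cM_a}$ satisfies $(a,0)\in B((a_n,b_n),r_n)$ for some $n$, forcing $\norm{a-a_n}_\infty<r_n$ and $\norm{b_n}_\infty<r_n$, hence $\norm{a_n}_\infty\ge r_n$, so the relevant balls $B(a_n,r_n)$ cover $S_{\cM_a}$ off the origin (cf.\ \cite{LZ2020}). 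This contradicts the BCP of $\cM$, so $\cM_c=0$. Next, $\cM$ must be $\sigma$-finite: by Cheng's theorem \cite{Cheng} the dual $\cM^*$ is $w^*$-separable, so fix a $\sigma(\cM^*,\cM)$-dense sequence $\{f_n\}_{n\ge1}$; if $\{p_i\}_{i\in I}\subset\cP(\cM)$ were an uncountable orthogonal family of non-zero projections, choose for each $i$ a state $\varphi_i$ with $\varphi_i(p_i)=1$ and then $n(i)$ with $f_{n(i)}\in\{g:|g(p_i)-1|<\tfrac12\}$, so that $\operatorname{Re}f_{n(i)}(p_i)>\tfrac12$; by pigeonhole some value $n$ is attained by $n(i)$ on an infinite $S\subset I$, and for a finite $J\subset S$ with $|J|>2\norm{f_n}$ we get $\norm{f_n}\ge\operatorname{Re}f_n(\sum_{i\in J}p_i)=\sum_{i\in J}\operatorname{Re}f_n(p_i)>\tfrac12|J|>\norm{f_n}$, a contradiction. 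Combining the two facts: $\cM\cong(\bigoplus_{i\in I}B(\cH_i))_{\ell_\infty}$ with the central projections $z_i$ pairwise orthogonal, so $I$ is countable; and each summand $B(\cH_i)$, being a corner of a $\sigma$-finite algebra, is $\sigma$-finite, forcing $\cH_i$ separable. Thus $\cM_*$ is separable.

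For the converse, assume $\cM$ is atomic with separable predual, so $\cM\cong(\bigoplus_{n}B(\cH_n))_{\ell_\infty}$ with countably many separable $\cH_n$. Each $B(\cH_n)$ has the UBCP: in the finite-dimensional case the unit sphere is compact, and a finite cover of it by balls of radius $\tfrac12$ with centres on the sphere is disjoint from $B(0,\tfrac12)$ and witnesses the UBCP; in the infinite-dimensional case $B(\cH_n)\cong B(\ell_2)$ has the UBCP by \cite[Corollaries 3.1 and 3.4]{LLLZ}. Finally, a countable $\ell_\infty$-sum of spaces with the UBCP again has the UBCP: this refines the ``if'' part of the characterization of the BCP of $\ell_\infty$-sums in \cite{LZ2020}, and is obtained by assembling, for each $n$, appropriately placed copies of a uniform ball covering of $S_{B(\cH_n)}$ into a covering of $S_\cM$, in the spirit of \cite{LLLZ}. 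Hence $\cM$ has the UBCP, and a fortiori the BCP.

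The step I expect to require the most care is the converse, specifically producing a single \emph{uniform} ball covering of the $\ell_\infty$-sum out of uniform coverings of the blocks $B(\cH_n)$: one must control the radii and the origin-gap simultaneously in $n$, which is the genuine technical heart of the constructions in \cite{LLLZ,LZ2020} and must be adapted rather than merely quoted. A lesser point is to make sure that the descent of the BCP to $\ell_\infty$-summands, and the passage between ``separable predual'' and ``faithful normal representation on a separable Hilbert space'', are stated in exactly the generality needed.
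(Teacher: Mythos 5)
Your ``only if'' direction is complete and correct, though it runs differently from the paper in one place: the paper rules out non-$\sigma$-finite algebras by the argument of Proposition~\ref{prop:noncou} (an element far from any prescribed sequence, built from a $\tau$-finite projection orthogonal to a $\sigma$-finite support projection), whereas you invoke Cheng's theorem that the BCP forces $w^*$-separability of $\cM^*$ and then show directly that a $w^*$-separable dual rules out uncountable orthogonal families of projections; your pigeonhole computation is sound, and your descent of the BCP to the atomic $\ell_\infty$-summand (needed to apply Theorem~\ref{atomlessnotbcp}) is a detail the paper leaves implicit, so this part is a legitimate, arguably cleaner, alternative. The BCP half of the ``if'' direction also coincides with the paper: \cite[Corollaries 3.1 and 3.4]{LLLZ} for $B(\cH)$ plus the $\ell_\infty$-sum theorem of \cite{LZ2020}.

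The genuine gap is the UBCP half of the converse. Your argument rests on the lemma ``a countable $\ell_\infty$-sum of spaces with the UBCP again has the UBCP,'' which you do not prove and which is not available in the cited literature (\cite{LZ2020} gives only the BCP statement); as stated, without any uniformity hypothesis, it is not clear it is even true, since the natural assembly of component coverings produces balls whose radii are controlled by $\sup_n R_n$ and whose distance to the origin is controlled by $\inf_n \delta_n$, where $(R_n,\delta_n)$ are the radius bound and origin-gap of the covering of $S_{B(\cH_n)}$ — exactly the uniformity-in-$n$ issue you flag and then defer. In your specific situation the route can be completed, because the blocks fall into controlled classes: every finite-dimensional $B(\cH_n)$ admits, by compactness, a finite covering with radius $\tfrac12$ and centres on the sphere (universal constants), and every infinite-dimensional block is isometric to $B(\ell_2)$, so one fixed uniform covering can be transported to all of them; with such uniform component data the assembly (scaling a component centre to norm slightly larger than $1$ so that the other coordinates, of norm at most $1$, are absorbed by the radius while a uniform gap survives) does yield the UBCP. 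But this construction is precisely the missing content, and it must be written out. The paper avoids it altogether: the UBCP of an atomic algebra with separable predual is obtained from Theorem~\ref{main2} (applied with $E(\cM,\tau)=\cM$, a strongly symmetric space with Fatou norm over a separable trace), where the uniform radii and the uniform origin-gap come out explicitly ($\norm{z-x}_E<\tfrac98<\tfrac{11}{8}\le\norm{z}_E\le\tfrac{17}{8}$). So either carry out the uniform assembly for the $\ell_\infty$-sum, or replace that step by an appeal to Theorem~\ref{main2} as the paper does.
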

 \begin{proof} By \cite[Corollary 3.1]{LLLZ}, $B(\cH)$ has the UBCP whenever $\cH$ is a separable Hilbert space  (equivalently,  $\cM$ is an atomic  and $\cM_*$ is separable), which together with
  \cite[Theorem 2.8]{LZ2020} implies that  any atomic type $I$ von Neumann algebra on a separable Hilbert space has the BCP.
 The UBCP  of any atomic type $I$ von Neumann algebra on a separable Hilbert space  follows from Theorem \ref{main2} below.

By Proposition \ref{atomlessnotbcp}, it remains to show that any non-$\sigma$-finite atomic  von Neumann algebra fails the BCP.
The case when $\cM$ is commutative follows from \cite[Remark 2.3]{LZ2020}. The general case when $\cM$ is a  non-$\sigma$-finite von Neumann algebra follows from a similar argument, see e.g. Proposition~\ref{prop:noncou} below for a complete proof.
\end{proof}

The situation for $C^*$-algebras is quite different from that of von Neumann algebras.

Let $\cA$ be a unital commutative $C^\ast$-algebra and let $\mathfrak{P}(\cA)$ be the  space of all pure states of $\cA$.
The pure state space 
  $\mathfrak{P}(\cA)$ is a compact Hausdorff space with respect to 
  the 
weak$^\ast$-topology,    and,  by well-known Gelfand--Naimark Theorem \cite[Theorem 4.4.3]{KR} (see also \cite[Theorem II.2.2.4]{Blackadar}), 
the $C^\ast$-algebra $\cA$ is *-isomorphic to the $C^\ast$-algebra $C(\mathfrak{P}(\cA))$ of all complex-valued continuous functions on $\mathfrak{P}(\cA).$

Let $K$ be a topological space. A collection of nonempty open subsets
$\mathcal{U} = \left\{U_\alpha\right\}_{\alpha\in  A}$ of $K$ is called a $\pi$-basis (weak basis) for $K$
if every nonempty open subset
of $K$ contains some $U_\alpha$ in $\mathcal{U}$ \cite[Page 15]{T}.

 In the setting of commutative $C^*$-algebra,  \cite[Theorem 2.1]{LLLZ} can be reformulated as follows. Note that \cite[Theorem 2.1]{LLLZ} only deals with real continuous functions. The complex case is a direct consequence of the real case.

\begin{corollary}\label{com}
Let $\cA$ be a  unital commutative $C^\ast$-algebra. Then the followings are equivalent:
\begin{enumerate}
\item $\cA$  has the BCP;
\item the pure state space $\mathfrak{P}(\cA)$  has a countable $\pi$-basis;
\item $\cA$ has the UBCP.
\end{enumerate}
\end{corollary}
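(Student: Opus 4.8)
The plan is to reduce Corollary~\ref{com} to the already-cited result \cite[Theorem 2.1]{LLLZ}, which states that for a locally compact Hausdorff space $K$, the space $C_0(K)$ of real-valued continuous functions vanishing at infinity has the BCP if and only if $K$ has a countable $\pi$-basis, and that this is equivalent to the UBCP. Since $\cA$ is a \emph{unital} commutative $C^\ast$-algebra, the Gelfand--Naimark theorem (cited above) gives a $\ast$-isomorphism $\cA \cong C(\mathfrak P(\cA))$ where $\mathfrak P(\cA)$ is a compact Hausdorff space; in particular $C_0(\mathfrak P(\cA)) = C(\mathfrak P(\cA))$, so the real-valued version of \cite[Theorem 2.1]{LLLZ} already yields the equivalence of (1), (2), (3) \emph{for the real scalar field}. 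The only genuine content left is to pass from real scalars to complex scalars, i.e.\ to argue that $C(\mathfrak P(\cA))$ over $\mathbb C$ has the BCP (resp.\ UBCP) if and only if $C(\mathfrak P(\cA))$ over $\mathbb R$ does. I would phrase this as a short lemma and insert it before invoking \cite[Theorem 2.1]{LLLZ}.

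For the complex-to-real passage, first I would recall that a complex normed space $X$ has the BCP if and only if its underlying real space $X_{\mathbb R}$ has the BCP. The backward direction is trivial: a covering of the unit sphere of $X_{\mathbb R}$ by countably many real balls off the origin is automatically a covering of the unit sphere of $X$, and real balls and complex balls coincide as sets since the norm is the same. For the forward direction, suppose $\{B(x_n, r_n)\}_{n\ge 1}$ covers the unit sphere of $X$ with $0 \notin B(x_n,r_n)$, i.e.\ $\|x_n\| > r_n$. Given a unit vector $y \in X_{\mathbb R} = X$, it lies in some $B(x_n, r_n)$, hence $e^{i\theta} y \in B(x_n, r_n)$ is false in general, but the point is the other way: I want a countable family of real balls. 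The trick is to take, for a countable dense set $\{\theta_k\}$ of phases in $[0,2\pi)$, the balls $B(e^{i\theta_k} x_n, r_n)$; these are still balls off the origin (same radius, same norm of center), and by a standard approximation argument (if $\|y - x_n\| < r_n$ then $\|e^{-i\theta} y - e^{-i\theta} x_n\| < r_n$, and choosing $\theta_k$ close to $0$ keeps $\|y - e^{i\theta_k}x_n\|$ below $r_n$ when the inequality is strict — but one must be slightly careful on the boundary). Actually the cleanest route is: the complex BCP trivially implies the real BCP of the \emph{same} family viewed in $X_{\mathbb R}$ up to replacing closed balls with slightly larger open ones (or vice versa), because the \emph{sets} $\{x : \|x - c\| \le r\}$ are identical whether one regards scalars as real or complex. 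So in fact \textbf{no} phase-rotation argument is needed: the unit sphere of $X$ equals the unit sphere of $X_{\mathbb R}$, every closed (open) complex ball equals the closed (open) real ball with the same center and radius, and ``not containing the origin'' is the same condition. Hence $X$ has the BCP/SBCP/UBCP if and only if $X_{\mathbb R}$ does, and the corollary follows immediately.

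Concretely, the proof I would write is: By the Gelfand--Naimark theorem \cite[Theorem 4.4.3]{KR}, $\cA$ is isometrically $\ast$-isomorphic to the complex Banach space $C(\mathfrak P(\cA))$, and $\mathfrak P(\cA)$ is compact Hausdorff, so $C(\mathfrak P(\cA)) = C_0(\mathfrak P(\cA))$. The real Banach space underlying $C(\mathfrak P(\cA))$ contains $C_0(\mathfrak P(\cA);\mathbb R)$ and, since norm, unit sphere, and the collection of balls (open or closed) off the origin of a complex normed space coincide with those of its underlying real space, $C(\mathfrak P(\cA))$ has the BCP (resp.\ UBCP) if and only if $C_0(\mathfrak P(\cA);\mathbb R)$ does. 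By \cite[Theorem 2.1]{LLLZ}, the latter holds if and only if $\mathfrak P(\cA)$ has a countable $\pi$-basis, and moreover BCP and UBCP are then equivalent. Combining these equivalences gives (1) $\Leftrightarrow$ (2) $\Leftrightarrow$ (3).

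The main obstacle — and it is a minor one — is making the real/complex reduction airtight: one must observe that $C_0(\mathfrak P(\cA);\mathbb R)$ is \emph{not} literally the underlying real space of $C(\mathfrak P(\cA))$ (the latter is $C_0(\mathfrak P(\cA);\mathbb C)$, which is ``twice as big''), so a sentence is needed explaining why the BCP of the complex space is equivalent to the BCP of the real-valued function space. The resolution is the remark already made in the excerpt that ``the complex case is a direct consequence of the real case,'' which can be justified by noting that a BCP covering of $C_0(K;\mathbb R)$ together with the covering obtained by multiplying all centers by $i$ (or by a countable dense set of unit complex scalars) covers the sphere of $C_0(K;\mathbb C)$, using that a finite-dimensional-type argument is unnecessary since $|e^{i\theta}f + g| $ estimates are uniform; conversely restricting a complex BCP covering to real-valued functions and intersecting centers with $C_0(K;\mathbb R)$ (via the real-part projection, which is norm-one) produces a real BCP covering. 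I would spell out the ``multiply centers by a countable dense set of phases'' direction in one or two lines, as that is the only place where a genuine (though elementary) argument is required.
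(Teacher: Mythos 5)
Your overall route is the same as the paper's: identify $\cA$ with $C(\mathfrak{P}(\cA))$ via Gelfand--Naimark and invoke \cite[Theorem 2.1]{LLLZ}; the paper itself offers no more detail than the remark that the complex case follows from the real one. The problem is that the one place where you (correctly) identify that a genuine argument is needed --- passing between $C_0(K;\mathbb{R})$ and $C_0(K;\mathbb{C})$ --- is exactly where your proposed argument breaks down, in both directions. The claim that multiplying the centers of an \emph{arbitrary} real BCP covering by a countable dense set of unimodular scalars yields a covering of the complex sphere is false. Already for $K$ a two-point space, cover the unit sphere of $\ell_\infty^2(\mathbb{R})$ by the balls $B((\pm 1,0),0.99)$, $B((0,\pm 1),0.99)$ together with the four corner balls $B((\pm N,\pm N),N-0.99)$ for large $N$; all are off the origin and they cover the real sphere. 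But no ball $B\bigl(e^{i\theta}c, r\bigr)$ with $c$ one of these centers contains the complex unit vector $(1,i)$: for the axis centers one coordinate of the center is $0$ while the corresponding coordinate of $(1,i)$ has modulus $1>0.99$, and for the corner centers membership would force $\cos\theta\ge 0.99$ and $\sin\theta\ge 0.99$ simultaneously. So the complex sphere is not covered by phase rotations, even using all phases. The failure is structural: a single global phase can only align one ``large'' region of the target function, so the argument can only work for coverings whose centers are localized --- which is precisely what the LLLZ construction provides (Urysohn-type bumps built from the $\pi$-basis elements), and why the complex case must be obtained by rerunning that construction with a countable dense set of phases (choosing a $\pi$-basis element inside an open set where the target $g$ has nearly constant argument and modulus close to $1$), rather than by a formal two-line deduction from the real-case \emph{statement}.

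The converse reduction you sketch (``restrict a complex BCP covering and replace centers by their real parts'') also has a genuine gap: the real-part projection is norm one, so it preserves the inclusion $f\in B(g_n,r_n)\Rightarrow f\in B(\operatorname{Re}g_n,r_n)$, but it does \emph{not} preserve the off-origin condition $\|g_n\|>r_n$. For instance, on a two-point space the closed ball of radius $1$ about $g=(0.65+0.8i,\,0)$ is off the origin and contains the real unit vector $f=(0.05,1)$, yet $\|\operatorname{Re}g\|=0.65\le \|f-\operatorname{Re}g\|=1$, so no real ball centered at $\operatorname{Re}g$ both contains $f$ and avoids the origin. So the implication (1)$\Rightarrow$(2) also cannot be obtained by this formal projection trick; one must instead check that the LLLZ proof that the BCP forces a countable $\pi$-basis works verbatim for complex scalars (it does, since it only uses the balls' centers through quantities like $|c(t)|$ and does not use the order structure in an essentially real way). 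In short: your reduction scheme is the right shape and matches the paper's intent, but both bridging arguments as written would fail; the honest proof of ``the complex case is a direct consequence of the real case'' consists in adapting the LLLZ construction/argument, not in transforming arbitrary coverings.
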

In particular, there exist   commutative $C^*$-algebras containing no minimal projections but  having the BCP. 
For example, the uniform norm closure of the linear span of Haar functions\cite[p.150]{LT2}.

We conclude this section  by the following question.

\begin{question}
 It will be interesting to find 
  a non-commutative version of Corollary~\ref{com}, that is, how to 
characterize  those  $C^\ast$-algebras $\cA$ having  BCP/UBCP. 
\end{question}

\section{The BCP of noncommutative symmetric spaces}

Let $E(0,\infty)$  be a Banach space of real-valued Lebesgue measurable
functions on  $(0,\infty)$ (with identification
$m$-a.e.), equipped with a  norm $\left\|\cdot\right\|_E$.
The space $E(0,\infty)$ is said to be {\it
absolutely solid} if $x\in E(0,\infty)$ and $|y|\leq |x|$, $y\in S(0,\infty)$
implies that $y\in E(0,\infty)$ and $\|y\|_E\leq\|x\|_E.$
An absolutely solid space $E(0,\infty)\subseteq S(0,\infty)$ is said to be {\it
symmetric} if for every $x\in E(0,\infty)$ and every $y\in S(0,\infty)$,
 the assumption
$\mu(y)=\mu(x)$ implies that $y\in E(0,\infty)$ and $\left\|y\right\|_E=\left\|x\right\|_E$
\cite{KPS,LT2,Bennett_S}.

\begin{definition}\label{opspace}
Let $\cM $ be a semifinite von Neumann  algebra  equipped
with a faithful normal semi-finite trace $\tau$.
Let $\mathcal{E}$ be a linear subset in $S({\mathcal{M}, \tau})$
equipped with a complete norm $\left\|\cdot \right \|_{\mathcal{E}}$.
We say that
$\mathcal{E}$ is a \textit{symmetric    space}  if
for $x \in
\mathcal{E}$, $y\in S({\mathcal{M}, \tau})$ and  $\mu(y)\leq \mu(x)$ imply that $y\in \mathcal{E}$ and
$\left\|y\right\|_\mathcal{E}\leq \left\|x\right\|_\mathcal{E}$.
\end{definition}
Let $E(\cM,\tau)$ be a symmetric   space.
It is well-known that any symmetrically normed space $E(\cM,\tau)$ is a normed $\cM$-bimodule (see e.g.  \cite{DP2014} and \cite{DPS}). 
That is, for any symmetric operator space $E(\cM,\tau)$, we have
$$\left\|axb\right\|_E \le \left\|a\right\|_\infty \left\|b\right\|_\infty \left\|x\right\|_E, ~a, b \in \cM,~ x\in E(\cM,\tau) .$$
It is known that whenever $E(\cM,\tau)$ has   order continuous norm $\norm{\cdot}_E$, i.e.,  $\norm{x_\alpha}_E\downarrow 0$ whenever $0\le x_\alpha \downarrow 0\subset E(\cM,\tau)$, we  have $E(\cM,\tau)\subset S_0(\cM,\tau)$~\cite{DPS,HSZ,DP2014}.
Moreover, if $\cM$ can be represented on a separable Hilbert space, then $E(\cM,\tau)$ is separable whenever $\norm{\cdot}_{E}$ is order continuous on $E(\cM,\tau)$  \cite{Me}. 
In particular, a symmetric function space $E(0,\infty)$ is separable if and only if it has order continuous norm\cite[page 7]{LT2}.

There exists a strong connection between symmetric function spaces and
operator spaces exposed in \cite{Kalton_S} (see also \cite{LSZ,DP2014}).
The operator space $E(\cM,\tau)$ defined by
\begin{equation*}
E(\mathcal{M},\tau):=\{x \in S(\mathcal{M},\tau):\ \mu(x )\in E(0,\infty)\},
\ \left\|x \right\|_{E(\mathcal{M},\tau)}:=\left\|\mu(x )\right\|_E
\end{equation*}
 is a complete symmetric  space  whenever $(E(0,\infty),\left\|\cdot\right\|_E)$ is    a complete  symmetric  function space on $(0,\infty)$  \cite{Kalton_S}.
In particular, for any symmetric function space $E(0,\infty)$, we have \cite[Lemma 18]{DP2014} $$F(\tau)\subset E(\cM,\tau). $$
In the special case when $E(0,\infty)=L_p(0,\infty)$, $1\le  p\le \infty$, $E(\cM,\tau)$ is the noncommutative $L_p$-spaces affiliated with $\cM$ and we denote the norm by $\norm{\cdot}_p$.

The \emph{carrier projection} $c_E=c_{E(\cM,\tau)}\in \cM$ of   $E(\cM,\tau)$ is defined by setting
$$c_E := \bigvee \left\{p:p\in \cP(E)\right\}.$$
 It is clear that $c_E$ is in the center $Z(\cM)$ of $\cM$ \cite{DPS,DP2014}.
It is often assumed that the carrier projection $c_E$ is equal to ${\bf 1}$.
Indeed, for any  symmetric function  space  $E(0,\infty)$, the carrier projection of the corresponding operator space $E(\cM,\tau)$ is always ${\bf 1}$ (see e.g.
\cite{DP2014,Kalton_S,DPS}).
On the other hand, if $\cM$ is atomless or is atomic and all atoms have equal trace, then any non-zero symmetric space $E(\cM,\tau)$ is necessarily $\bf 1$~\cite{DP2014,DPS}.

If $x,y\in S(\cM,\tau)$, then $x$ is said to be submajorized by $y$, denoted by $x\prec\prec y$, if \begin{align*} \int_{0}^{t} \mu(s;x) ds \le \int_{0}^{t} \mu(s;y) ds \end{align*} for all $t\ge 0$.
 A symmetric space $E(\cM,\tau)\subset S(\cM,\tau)$ is called \emph{strongly symmetric} if its norm $\left\|\cdot\right\|_E$ has the additional property that   $\left\|x\right\|_E \le \left\|y\right\|_E$ whenever $x,y \in E(\cM,\tau)$ satisfy $x\prec\prec y$.
In addition, if $x\in S(\cM,\tau)$, $y \in E(\cM,\tau)$ and $x\prec\prec y$ imply that $x\in E(\cM,\tau)$ and $\|x\|_E \le \|y\|_E$, then
 $E(\cM,\tau)$  is said to be a \emph{fully symmetric space} (of $\tau$-measurable operators).
Recall that every separable symmetric sequence/function  space $E$ is necessarily fully symmetric  (see e.g. \cite{LT2} \cite[Chapter II,Theorem 4.10]{KPS} or \cite[Chapter IV, Theorem 5.7]{DPS}).
Also,   most familiar symmetrically normed spaces, including the usual $L_p$-spaces and those associated with the names of Orlicz, Lorentz and Marcinkiewicz   are fully symmetric.

 It is known that $\ell_p(\Gamma)$ does not have the BCP for any $1\le p \le \infty$ and any uncountable index set, see e.g. \cite[Remrak 2.3]{LZ2020}.
 The following proposition extends this to the setting of noncommutative symmetric spaces.
 \begin{proposition}\label{prop:noncou}
 Let $\cM$ be a von Neumann algebra equipped with a semifinite faithful normal trace
 $\tau$.
 If  $\cM$ is not $\sigma$-finite, then any non-trivial symmetric space $E(\cM,\tau)$ with $c_{E(\cM,\tau)}={\bf 1 }$ fails the BCP.
 \end{proposition}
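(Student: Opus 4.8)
The plan is to exploit the failure of $\sigma$-finiteness to produce an uncountable "almost orthogonal" family inside $E(\cM,\tau)$ that no countable collection of balls off the origin can cover. Since $\cM$ is not $\sigma$-finite and $c_{E(\cM,\tau)}=\mathbf 1$, the carrier projection being $\mathbf 1$ forces every central projection to support a nonzero piece of $E$; combined with non-$\sigma$-finiteness this should give an uncountable family $\{e_\gamma\}_{\gamma\in\Gamma}$ of mutually orthogonal nonzero projections in $\cM$, each with $e_\gamma\in E(\cM,\tau)$ (one may shrink each $e_\gamma$ to a projection of finite trace, using that $\cF(\tau)\subset E(\cM,\tau)$ and that $E$ is nonzero; the finite-trace projections generate everything since $\tau$ is semifinite). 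Normalizing, set $f_\gamma := e_\gamma/\norm{e_\gamma}_E$, so $\norm{f_\gamma}_E=1$ for all $\gamma$. The key geometric fact is that for $\gamma\ne\gamma'$ the elements $f_\gamma$ and $f_{\gamma'}$ are "far apart" in a controlled way: because $e_\gamma\perp e_{\gamma'}$, for any $x\in E(\cM,\tau)$ at least one of $\norm{x-f_\gamma}_E$, $\norm{x-f_{\gamma'}}_E$ is bounded below — indeed $\mu(f_\gamma - f_{\gamma'})$ dominates $\mu(f_\gamma)$ on a set, hence $\norm{f_\gamma-f_{\gamma'}}_E\ge 1$, so no single ball of radius $<1/2$ can contain two of the $f_\gamma$'s.

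First I would make the covering argument precise. Suppose toward a contradiction that $\{B(x_n,r_n)\}_{n\ge1}$ is a sequence of balls off the origin whose union covers the unit sphere of $E(\cM,\tau)$; in particular it covers $\{f_\gamma:\gamma\in\Gamma\}$. By the pigeonhole principle, since $\Gamma$ is uncountable, some fixed ball $B(x_{n_0},r_{n_0})$ contains uncountably many $f_\gamma$; pick two distinct such, $f_\gamma, f_{\gamma'}$. The real work is then to show this is impossible, i.e. to establish a uniform lower bound on $\norm{f_\gamma - f_{\gamma'}}_E$ that is incompatible with both lying in a common ball not containing the origin. For the lower bound, note $(e_\gamma+e_{\gamma'})(f_\gamma - f_{\gamma'})(e_\gamma+e_{\gamma'}) = f_\gamma - f_{\gamma'}$ (the two pieces live in orthogonal corners), and the generalized singular value function of a direct sum of orthogonal pieces is the "decreasing rearrangement of the union", so $\mu(t;f_\gamma - f_{\gamma'})\ge \mu(t;f_\gamma)$ for $t$ in a suitable range; by symmetry of the norm this yields $\norm{f_\gamma-f_{\gamma'}}_E\ge\norm{f_\gamma}_E=1$. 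Meanwhile both points lie in $B(x_{n_0},r_{n_0})$ which excludes $0$, so $r_{n_0}<\norm{x_{n_0}}_E$; writing $\norm{f_\gamma-f_{\gamma'}}_E\le 2r_{n_0}<2\norm{x_{n_0}}_E$ does not immediately contradict $\ge1$, so I would instead use the sharper consequence of disjoint supports: the midpoint $\tfrac12(f_\gamma+f_{\gamma'})$ has norm $\le 1$ (in fact one can compute it exactly from $\mu$), yet $x_{n_0}$ would have to be within $r_{n_0}$ of both $f_\gamma$ and $f_{\gamma'}$ while staying at distance $\ge\norm{x_{n_0}}_E - $ (something) from $0$; the cleanest route is to show that the functional $\varphi_\gamma(x) := $ "the $e_\gamma$-corner evaluation" separates the $f_\gamma$'s enough that no ball off the origin catches two of them, mirroring the commutative $\ell_p(\Gamma)$ argument of \cite[Remark 2.3]{LZ2020}.

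Thus the step I expect to be the main obstacle is \emph{extracting the uncountable orthogonal family $\{e_\gamma\}$ actually lying in $E(\cM,\tau)$ with uniformly comparable $E$-norms}, and then pinning down the exact constant in the "two points in one ball off the origin" obstruction — the crude bound $\norm{f_\gamma-f_{\gamma'}}_E\ge1$ may not by itself close the contradiction, so I would likely need the finer estimate that for orthogonal unit vectors $u,v$ with $\mu(u)=\mu(v)$ one has, for any $x$, $\max\{\norm{x-u}_E,\norm{x-v}_E\}\ge c$ for an absolute $c$ bounded away from $0$ uniformly, together with the fact that a ball not containing the origin cannot have its center too close to the sphere. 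Once non-$\sigma$-finiteness is leveraged to get the uncountable family — and here I would invoke that a non-$\sigma$-finite von Neumann algebra admits an uncountable orthogonal family of $\sigma$-finite projections, cf. \cite[Proposition 5.5.9]{KR}, intersected with the carrier condition $c_E=\mathbf1$ to keep them in $E$ — the rest is the pigeonhole-plus-separation argument, which is short.
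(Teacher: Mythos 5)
There is a genuine gap, and it is the one you yourself flag: the pigeonhole-plus-separation scheme cannot be closed. The BCP places no upper bound on the radii of the balls, nor any quantitative relation between radius and center norm beyond ``the ball misses the origin'' (essentially $r<\norm{x}_E$ for the center $x$), so a pairwise bound $\norm{f_\gamma-f_{\gamma'}}_E\ge 1$ is perfectly compatible with two (indeed uncountably many) of the $f_\gamma$'s lying in one ball off the origin. Concretely, already in $\ell_2(\Gamma)$ the ball centered at $e_1+e_2$ of radius $1.2$ contains both orthogonal unit vectors $e_1,e_2$ and misses $0$ (distance $\sqrt2$); so no ``finer estimate'' of the kind you postulate (a universal constant $c$ with $\max\{\norm{x-u}_E,\norm{x-v}_E\}\ge c$ forcing a contradiction) can exist. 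Your fallback, the ``$e_\gamma$-corner evaluation'' mimicking the $\ell_p(\Gamma)$ argument, points in the right direction but has a second, noncommutative-specific obstruction you do not address: a center $x_n\in E(\cM,\tau)$ need not have $\sigma$-finite support (e.g.\ $\mathbf{1}\in E(\cM,\tau)$ is possible when $\mu(\infty;x)>0$ is allowed, as for $L_1+L_\infty$-type spaces), so it is simply false in general that each center ``touches'' only countably many of your corners, and the untouched-coordinate argument stalls.

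The paper's proof supplies exactly the missing idea, and it is not a pigeonhole argument at all: given an \emph{arbitrary} sequence of prospective centers $\{x_n\}$, one cuts the spectrum of each $|x_n|$ at the level $\mu(\infty;x_n)$ to produce a $\sigma$-finite projection $r_n$ (spectral projection above $\mu(\infty;x_n)$, augmented by finite-trace pieces at that level when needed) with $\mu(x_n r_n)=\mu(x_n)$; then $p:=\vee_n r_n$ is still $\sigma$-finite, hence $p\ne\mathbf{1}$ by non-$\sigma$-finiteness, and any nonzero $\tau$-finite projection $q'\le\mathbf{1}-p$ lies in $E(\cM,\tau)$ (here $\cF(\tau)\subset E$ and $c_E=\mathbf{1}$ are used) and satisfies
$$\norm{x_n-q'}_E\ \ge\ \norm{(x_n-q')p}_E\ =\ \norm{x_n p}_E\ =\ \norm{x_n}_E$$
for every $n$, so the normalized $q'$ is a unit-sphere point missed by every ball off the origin with center $x_n$. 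Note the quantifier structure: the test point is built \emph{after} the centers are given and is chosen orthogonal to the essential supports of all of them, which is what turns ``distance at least $1$ between test points'' (useless) into ``distance at least $\norm{x_n}_E$ from each center'' (exactly the threshold for a ball avoiding the origin). Your uncountable orthogonal family of finite-trace projections is a correct first step, but without the spectral truncation at $\mu(\infty;x_n)$ and the switch from pairwise separation to separation from the centers, the argument does not go through.
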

 \begin{proof}
 Since $\cM$ is not $\sigma$-finite, it follows that trace $\tau$ is infinite.

Let  $\{x_n\}_{n\ge 1}$ be an arbitrary sequence  in $E(\cM,\tau)$.
For any $n\ge 1 $, let
\begin{align*} r_n: =
\begin{cases}
 e^{|x_n|}(\mu(\infty; x_n),\infty), & \mbox{if } \mu(t; x_n)>   \mu(\infty; x_n) \mbox{ for all }t>0 , \\
 e^{|x_n|}(\mu(\infty; x_n),\infty) \vee w_n , & \mbox{otherwise},
\end{cases}
\end{align*}
where $w_n:= \vee _k z_k $, $z_n<e^{|x_n|  } (\mu(\infty; x_n)-\frac1k, \mu(\infty; x_n)]$ is a $\tau$-finite projection such that    $\tau(z_n)\to    \infty $ as $n\to \infty$.
Note that $r_n$  is a $\sigma$-finite projection in $\cM$ (see e.g. \cite[Corollary 5.5.7]{DPS}).

Letting $p: =\vee_n r_n$, we have
$$\mu(  x_n p)=\mu(x_n)$$
 for all $n$'s.
 Let $q={\bf 1}-p$. Note that $p$ is $\sigma$-finite and ${\bf 1}$ is not $\sigma$-finite. We obtain that  $q\ne 0 $.
 There exists a non-trivial $\tau$-finite projection $q'\le q$.
Since $q'\in E(\cM, \tau)$\cite[Lemma 18]{DP2014} and 
 $$\norm{x_n-q'}_E \ge \norm{(x_n-q')p}_E =  \norm{ x_np}_E =\norm{x_n}_E,$$
it follows that  $E(\cM,\tau)$ fails the BCP.
 \end{proof}
 \begin{remark}
 Note that the condition $c_{E(\cM,\tau)}={\bf 1}$ in Proposition \ref{prop:noncou}
can not be removed. For example,
$L_1(0,1/2)\oplus 0$ is a separable symmetric space affiliated with the algebra $L_\infty(0,1/2) \oplus \ell_\infty (\Gamma)$, where $\Gamma$ is an uncountable index set with the counting measure.

 \end{remark}

 Note that there are $\sigma$-finite von Neumann algebras which can not be represented on a separable Hilbert space, e.g. $\oplus_{\Gamma} L_\infty (0,1)$, where $\Gamma $ is an uncountable index set~\cite[Example 5.6.13]{DPS}.
Below, we show that
if $\cM $ is a $\sigma$-finite semifinite von Neumann algebra with a non-separable predual, then any
  fully symmetric spaces affiliated with  $\cM $
    fails the BCP.

\begin{proposition}\label{prop:nonseparable}
Let $\cM$ be a $\sigma$-finite  von Neumann algebra with a semifinite faithful normal   trace $\tau.$
Let $E(\cM,\tau)$ be a  non-trivial fully symmetric space affiliated with $\cM$.
If $\cM_*$ is not separable, then
$E(\cM,\tau)$ fails the BCP.
\end{proposition}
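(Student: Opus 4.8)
The plan is to argue by contradiction through the well‑known implication of Cheng that if a Banach space $X$ has the BCP then $X^{*}$ is $w^{*}$‑separable, equivalently $X^{*}$ contains a \emph{countable} set separating the points of $X$; so it suffices to show that $E(\cM,\tau)^{*}$ has no countable point‑separating subset. Throughout I keep the standing normalisation $c_{E(\cM,\tau)}=\mathbf 1$, which guarantees $\cF(\tau)\subset E(\cM,\tau)$ and that for every $\tau$‑finite projection $p$ the corner $\cM_{p}=p\cM p$ embeds isometrically into $E(\cM,\tau)$ as the symmetric space $E(\cM_{p},\tau|_{\cM_{p}})$ over the finite von Neumann algebra $\cM_{p}$ (the embedding being $x\mapsto x$, since $\mu$ is computed locally). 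The first step is to produce a \emph{$\tau$‑finite} projection $e$ with $(\cM_{e})_{*}$ non‑separable. Let $\cD$ be the set of $\tau$‑finite $p\in\cP(\cM)$ with separable $(\cM_{p})_{*}$. One checks that $\cD$ is upward directed (a corner $(p\vee q)\cM(p\vee q)$ is countably generated whenever $\cM_{p},\cM_{q}$ are, using $\tau$‑semifiniteness to produce a countable linking family of partial isometries) and closed under countable suprema (if $p_{n}\uparrow p$ and $\varphi\in(\cM_{p})_{*}$ then $\|\varphi-\varphi(p_{n}\cdot p_{n})\|\to0$). Since $\cM$ is $\sigma$‑finite, $p_{\infty}=\bigvee\cD$ is attained along a sequence in $\cD$, so $(\cM_{p_{\infty}})_{*}$ is separable; if $p_{\infty}=\mathbf 1$ the same sequential argument gives $\cM_{*}=\overline{\bigcup_{n}(\cM_{p_{n}})_{*}}$ separable, contrary to hypothesis. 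Hence $p_{\infty}\neq\mathbf 1$, and any nonzero $\tau$‑finite $e\le\mathbf 1-p_{\infty}$ has $(\cM_{e})_{*}$ non‑separable. Fix such an $e$ and rescale $\tau$ so that $\tau_{e}:=\tau|_{\cM_{e}}$ is a faithful normal tracial state.

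Now split according to the fundamental function $\phi_{E}$. Suppose first $\phi_{E}(0^{+})=0$. Then over the finite algebra $\cM_{e}$ the closure $\overline{\cM_{e}}^{\,\|\cdot\|_{E}}$ carries an order continuous norm; hence every bounded functional on it is order continuous and is represented by a density in the Köthe dual $E^{\times}(\cM_{e},\tau_{e})$, which, being a Banach symmetric space over a finite‑trace algebra, is continuously contained in $L_{1}(\cM_{e},\tau_{e})$; thus that density defines a \emph{normal} functional on $\cM_{e}$. Consequently $f|_{\cM_{e}}$ is normal on $\cM_{e}$ for every $f\in E(\cM,\tau)^{*}$. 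If $(f_{n})_{n\ge1}\subset E(\cM,\tau)^{*}$ separated the points of $E(\cM,\tau)\supseteq\cM_{e}$, then $(f_{n}|_{\cM_{e}})_{n}$ would be a countable separating family of normal functionals on $\cM_{e}$, forcing $(B_{\cM_{e}},w^{*})$ to be metrizable and $(\cM_{e})_{*}$ separable — a contradiction. Hence $E(\cM,\tau)$ fails the BCP in this case.

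If instead $\phi_{E}(0^{+})=\delta>0$, then $\|x\|_{E}\ge\delta\|x\|_{\infty}$ and $E(\cM,\tau)\subseteq\cM$ continuously. I would pass to the atomless central summand $z\cM z$: the atomic part of a $\sigma$‑finite semifinite algebra is a countable $\ell_{\infty}$‑sum of algebras $B(\cH_{j})\bar\otimes\ell_{\infty}(\Gamma_{j})$ with $\cH_{j}$ separable and $\Gamma_{j}$ countable, hence has separable predual, so $(z\cM z)_{*}$ stays non‑separable; since $\|y_{n}-g\|_{E}\ge\|z(y_{n}-g)z\|_{E}$, we may assume $\cM$ atomless. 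If $\mathbf 1\in E$ then $E(\cM,\tau)=\cM$ and Theorem~\ref{atomlessnotbcp} applies directly. Otherwise I would rerun the construction behind Theorem~\ref{atomlessnotbcp} and Corollary~\ref{th:atomless}: for a given sequence $(y_{n})\subset E(\cM,\tau)$, Lemma~\ref{atomlesscenter} furnishes a decreasing net of projections allowing one to compress $y_{n}$ onto a block with orthogonal left and right supports while keeping its generalised singular value function; writing $g=-\sum_{n}v_{n}\in\cF(\tau)$ with $v_{n}$ the resulting partial isometries, on the $n$‑th block $g-y_{n}$ is unitarily equivalent to a projection plus $|z_{n}|$, whence $\|g-y_{n}\|_{E}\ge\|\chi_{(0,a_{n})}+\mu(z_{n})\|_{E}\ge\|y_{n}\|_{E}+c$, where the last inequality uses $\|\chi_{(0,a)}+h\|_{E}\ge\|h\|_{E}+c$ for $h$ supported in $(0,a)$, valid with $c=c(\delta)>0$ because $L_{1}\subseteq E^{\times}$ bounds $\|\psi\|_{1}$ from below on the unit sphere of $E^{\times}$. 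After a harmless rescaling this shows that no ball off the origin around $y_{n}$, of radius $<\|y_{n}\|_{E}$, covers a suitable point of the unit sphere, so the BCP fails.

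The step I expect to be the real obstacle is precisely this last one: unlike in the $\|\cdot\|_{\infty}$ setting, the construction requires a genuine $E$‑norm refinement of Lemma~\ref{atomlesscenter} — compressing the $y_{n}$ to shrinking corners while preserving their singular value functions, not merely their uniform norms — together with a careful handling of the normalisation of $g$, since $\phi_{E}(0^{+})>0$ prevents $\|g\|_{E}$ from being made small. By contrast, the two reductions (to $c_{E}=\mathbf 1$ and to a $\tau$‑finite corner with non‑separable predual) and the identification of the induced norm on corners with the expected symmetric norm are the routine verifications.
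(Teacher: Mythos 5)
Your first case ($\phi_E(0^+)=0$) is essentially sound: restrictions of functionals in $E(\cM,\tau)^*$ to a $\tau$-finite corner $\cM_e$ are normal (order continuity on the closure of $\cM_e$ plus K\"othe duality over the finite-trace corner), and a countable separating family of normal functionals makes the unit ball of $\cM_e$ $w^*$-metrizable, hence $(\cM_e)_*$ separable, a contradiction. (Your construction of $e$ is more elaborate than needed and not airtight: $\cD$ is not literally closed under countable suprema, since such suprema need not be $\tau$-finite, and the directedness claim for $\cD$ is left unproved. It is also unnecessary: since $\cM$ is $\sigma$-finite and $\tau$ semifinite there are $\tau$-finite $p_k\uparrow\mathbf 1$, and $\cM_*=\overline{\bigcup_k p_k\cM_*p_k}$, so some corner $\cM_{p_k}$ already has nonseparable predual.)

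The genuine gap is the case $\phi_E(0^+)>0$. First, the reduction to the atomless central summand $z\cM z$ fails: for a unit vector $g\in zE(\cM,\tau)z$ you only get $\norm{g-y_n}_E\ge\norm{g-zy_nz}_E$, whereas to escape a ball around $y_n$ missing the origin you need $\norm{g-y_n}_E\ge\norm{y_n}_E$, and $\norm{y_n}_E$ may be carried mostly by the atomic component $\norm{(\mathbf 1-z)y_n}_E$, which no additive gain over $\norm{zy_nz}_E$ can absorb while $g$ stays on the unit sphere; separability of the atomic part's predual does not help because $E((\mathbf 1-z)\cM)$ need not be separable (think $\ell_\infty$), and $E(\cM,\tau)$ is not an $\ell_\infty$-sum of the two summands, so the Luo--Zheng stability result is unavailable. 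Second, even for atomless $\cM$, the $E$-norm analogue of Lemma~\ref{atomlesscenter}/Theorem~\ref{atomlessnotbcp} and the inequality $\norm{\chi_{(0,a)}+h}_E\ge\norm{h}_E+c(\delta)$ are only asserted; the duality justification you sketch presupposes that $\norm{\cdot}_E$ is recovered from $E^\times$, which needs a Fatou-type property not assumed in the proposition, and you yourself flag this step as the obstacle. The paper's proof avoids the dichotomy altogether, and this is the missing idea: when the joint support of the centers is $\mathbf 1$, place the bounded spectral pieces $x_ne^{|x_n|}[k-1,k)$ of the centers together with $\tau$-finite $p_k\uparrow\mathbf 1$ into a countably generated $\sigma$-finite subalgebra $\cN$, which has separable predual, so $\cN\subsetneqq\cM$; take the trace-preserving conditional expectation $\cE:\cM\to\cN$, pick $y\in\cM\setminus\cN$, and normalize $x=(y-\cE(y))/\norm{y-\cE(y)}_E$. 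Since $\cE(w)\prec\prec w$, full symmetry makes $\cE$ an $E$-norm contraction, whence $\norm{x-x_n}_E\ge\norm{\cE(x-x_n)}_E=\norm{x_n}_E$ for all $n$, so the unit vector $x$ is uncovered regardless of the behavior of the fundamental function. Full symmetry enters precisely through the contractivity of $\cE$, which your case split does not reproduce.
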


\begin{proof}
Since $E(\cM,\tau)$  is a fully symmetrically normed space and $E(\cM,\tau)\ne \{0\}$, it follows that  $c_{E(\cM,\tau)}={\bf 1}$,  see e.g. \cite[Remark 4.5.4(b)]{DPS}.

Let $\{x_n\}_{n\ge 1}$ be an arbitrary sequence in $E(\cM,\tau)$.
 Let $$p:=\bigvee_n \left(l(x_n)\vee r(x_n)\right).$$
If $p\ne {\bf 1}$, then we have
\begin{align}\label{pne1}
\norm{x_n - \frac{q}{\norm{q}_E}}_{E }\ge \norm{x_n}_E
\end{align}
for all $n$ and any $\tau$-finite projection $q\in \cM$ with  $q\le {\bf 1}-p$.

Assume that $p={\bf 1} $.
Since $\cM$ is $\sigma$-finite, it follows that there exists a sequence $\{p_k\}_{k\ge 1}$ of $\tau$-finite projections in $\cM$ such that $$p_k\uparrow {\bf 1} .$$
Let $\cN$ be the von Neumann subalgebra   of $\cM$ generated by
$$
\{ x_{n,k}\}_{n,k \ge 1}\cup \{p_n\}_{n\ge 1}$$
 in $\cM,$ where $x_{n,k}:=x_n e^{|x_n|}[k-1, k)$,  $k\ge 1.$
Moreover, since $\cM$ is $\sigma$-finite, it follows that   $\cN$   can be represented on a   separable Hilbert space\footnote{It is well-known that a countably generated $\sigma$-finite von Neumann algebra can be represented on a separable Hilbert space. Indeed,
since $\cN$ is $\sigma$-finite, by \cite[Proposition II 3.19]{Tak}, $\cN$ admits a faithful normal state $\rho.$
Let $H$ be a Hilbert space completion of $\cN_\rho=\{x\in \cN: \rho(x^\ast x)<\infty\}$. Take a  $\ast$-subalgebra $\cA$ in $\cN$ generated by the countable set $\{x_{n,k}\}_{k\ge1}\cup\{p_n\}_{n\ge 1}.$ By Kaplansky Density Theorem \cite[Theorem II 4.8]{Tak}, the unit ball
of $\cA$ is dense in the unit ball of $\cN$ in the strong$^\ast$ topology.
By \cite[Proposition III 5.3]{Tak}, the  strong$^\ast$ topology on the unit ball of $\cN$ coincides with the topology generated by $\left\|\cdot\right\|_{\rho,2}$-norm (the Hilbertian norm generated by 
$\rho$). Since $\cA$ is countable generated, it follows that the unit ball of $\cN$ is $\left\|\cdot\right\|_{\rho, 2}$-separable. Therefore,   $\cN$ is also $\left\|\cdot\right\|_{\rho, 2}$-separable. So,   $\cN$ can be represented on  a separable Hilbert space  $H$ via left multiplication.}.
Note that $\tau|_\cN$ is also semifinite.
Indeed,
 for any $0<a\in\cN_+$, there exists $p_k$ such that  $\tau(a^{1/2} p_k  a^{1/2} )<\infty $
and  $0< a^{1/2} p_i  a^{1/2} \le a$, in other words, $\tau$ is semifinite on $\cN$.

Since $\cN$ can be represented on a separable Hilbert space, it follows from  \cite[Proposition 5.6.17 and Theorem 5.6.22]{DPS} (see also \cite[Proposition 1.2]{S00} and \cite{Me}) that
 any   symmetric space having order continuous norm affiliated with $\cN$ is separable.
Since $\cM_*$ is not separable, it follows that  $\cN \subsetneqq \cM$.
Let $\cE$ be the conditional expectation from $\cM$ onto $\cN$~\cite[Lemma 2.1]{DDPS} (see also \cite[Chapter V, Proposition 2.36]{Tak}).

Let us take an arbitrary  non-zero element $y\in \cM$ such that $y\notin \cN.$ Since $y\ne \cE(y)\in \cN,$ we  define an element $x\in \cM$ by setting
$$
x:=\frac{y-\cE(y)}{\|y-\cE(y)\|_1} \ne 0.
$$
By the  construction of $\cN$,
we have that $\{x_n\}_{n\ge 1}\subset L_1(\cN, \tau|_\cN).$ Taking into account that $\cE(x_n)=x_n$, $n\ge 1$,  $\cE(x)=0$ and $E(\cM,\tau)$ is fully symmetric,  we obtain
\begin{align*}
\left\|x-x_n\right\|_1 &   \stackrel{\mbox{\tiny \cite[Proposition~2.1]{DDPS}}}{\ge} \left\|\cE(x-x_n)\right\|_1=\left\|x_n\right\|_1
\end{align*}
for all $n\ge 1.$
This together with \eqref{pne1} shows that  $E(\cM,\tau)$ fails the  BCP. The proof is complete.
\end{proof}

%
%
%
%
%

By Propositions \ref{prop:noncou} and \ref{prop:nonseparable}, it suffices to consider symmetric spaces affiliated with a semifinite von Neumann algebra whose predual is separable.

Recall that a symmetric function space $E=E(0,1)$ (or $E(0,\infty)$) is said to have the Fatou norm if for any upwards directed net $\{x_\alpha\}$ in $E^+$ and $x\in E^+$, it follows from $x_\alpha\uparrow x$ that $$\norm{x_\alpha}_E\uparrow_\alpha\norm{x}_E .$$

Let $(\cM,\tau)$  be a semifinite von Neumann algebra. The trace $\tau$ is said to be separable if the set
$$P(\cM)\cap \cF(\tau)=\{p\in P(\cM):\tau(p)<\infty\}$$
is separable with respect to the measure topology.
If $\tau$ is separable, then $\cM$  is $*$-isomorphic to a von
Neumann subalgebra of $B(K)$ for some separable Hilbert space $K$, see
\cite[Proposition 1.1]{S00} and \cite[Theorem 5.6.22]{DPS}.
On the other hand, if $\cM$ acts on a separable Hilbert space, then
the trace $\tau$ is separable \cite[Proposition 5.6.17]{DPS}.

 The subset $E(\cM,\tau)^{oc}$ of $E(\cM,\tau)$ is defined by setting
$$E(\cM,\tau)^{oc} =\left\{
x\in E(\cM,\tau): |x| \ge x_n \downarrow _n 0 \Rightarrow \norm{x_n}_E\downarrow_n 0
 \right\}.$$
 The elements of $E(\cM,\tau)^{oc}$ are called the elements of order continuous norm in $E(\cM,\tau)$, see \cite[Section 8.4]{DP2014} or \cite[Chapter 5.4]{DPS}.
 We denote the closure of $\cF(\tau)$ in $E(\cM,\tau)$ by $E(\cM,\tau)^b$, that is,
 $$E(\cM,\tau)^b =\overline{\cF(\tau)}^b. $$


Below, we provide a characterization for a symmetric space to have the BCP.
\begin{theorem}\label{main}
Let $\cM$ be 
a von Neumann algebra  equipped with a separable semifinite faithful normal trace $\tau$.
Let $E(\cM,\tau)$ be a strongly symmetric space affiliated with 
$\cM$ such that  $\norm{\cdot}_E$ is a  Fatou norm.
If $$E(\cM,\tau)^{oc}=E(\cM,\tau)^b ,$$  then
$E(\cM,\tau)$ has the UBCP.
\end{theorem}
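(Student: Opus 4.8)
The plan is to construct an explicit countable ball-covering of the unit sphere of $E(\cM,\tau)$, exploiting the separability of the trace to get a countable "skeleton" of finite-trace projections and the Fatou norm to control approximation from below. First I would fix a countable set $\{p_k\}_{k\ge 1}$ of $\tau$-finite projections which is dense (in the measure topology) in $P(\cM)\cap \cF(\tau)$; by the separability hypothesis such a set exists. Using these projections together with the fact that $\cM$ (being representable on a separable Hilbert space, by \cite[Proposition 1.1]{S00} and \cite[Theorem 5.6.22]{DPS}) has a $\|\cdot\|_2$-separable unit ball, I would extract a countable $\|\cdot\|_\infty$-dense-in-measure subset $\cD\subset \cF(\tau)$, and then observe that $\cD$ is $\|\cdot\|_E$-dense in $E(\cM,\tau)^b=\overline{\cF(\tau)}^{\,b}$. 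The candidate BCP points will be the elements of $\cD$ (suitably scaled), and the radii will all be a fixed constant (say $2$), which is what makes the covering \emph{uniform}; I still need an ambient ball around the origin disjoint from all covering balls, which will come from the fact that the BCP points can be chosen with norm bounded below.

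The core of the argument is to show that every $x$ on the unit sphere lies in one of these balls, i.e. that there is $g\in\cD$ with $\|x-g\|_E<2$ (or whatever fixed radius), while keeping $0$ outside the ball, i.e. $\|g\|_E$ bounded away from $0$. The dichotomy is: either $x\in E(\cM,\tau)^{oc}=E(\cM,\tau)^b$, in which case $x$ is $\|\cdot\|_E$-approximable by elements of $\cF(\tau)$, hence by elements of $\cD$, and we are trivially inside a small ball; or $x\notin E(\cM,\tau)^{oc}$, and here is where the Fatou norm and strong symmetry enter. For such $x$, order-continuity fails, so there is a constant $c>0$ and a decreasing sequence $|x|\ge y_n\downarrow 0$ with $\|y_n\|_E\ge c$; equivalently, writing $x=u|x|$ and truncating $|x|$ by its spectral projections $e^{|x|}[0,n]$, the "tails" $x\,e^{|x|}(n,\infty)$ do not go to zero in norm. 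I would use this persistent tail to find, for a well-chosen $g\in\cD$ approximating a truncation $x\,e^{|x|}[0,N]$, a projection witnessing that $x-g$ still carries a large piece: concretely, $\|x-g\|_E \ge \|(x-g)\,e^{|x|}(N,\infty)\|_E = \|x\,e^{|x|}(N,\infty)\|_E$ which is bounded but bounded \emph{away from $0$} by the fixed lower bound $c$, while the bounded part is controlled by the fixed radius. The Fatou property is used precisely to pass the estimate $\|x_\alpha\|_E\uparrow\|x\|_E$ from finite-trace truncations to $x$ itself, ensuring the relevant norms are finite and the approximation is quantitatively tight; strong symmetry (submajorization monotonicity) lets me compare $x-g$ with convenient "model" elements built from $\cD$.

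The main obstacle I anticipate is the non-order-continuous case: showing that a \emph{fixed} radius $r$ (independent of $x$) suffices to cover \emph{all} points of the sphere, including those whose singular value function has a large essential-supremum tail $\mu(\infty;x)$. One must cover the "bounded-below-at-infinity" part of the sphere by balls centered at scalar multiples of projections from $\cD$ (analogous to how $\ell_\infty$ or $B(\ell_p)$ is covered in \cite{Cheng,LLLZ}), and simultaneously handle the compact-type part by density in $E(\cM,\tau)^b$; gluing these two families into one countable family with a common radius and a common origin-avoiding ball $B_0$ is the delicate bookkeeping step. I would isolate this as a lemma: for every $x\in S(E(\cM,\tau))$ with $\mu(\infty;x)=\lambda>0$, there is a $\tau$-finite projection $q$ among finitely many built from $\{p_k\}$ with $\|x-\lambda^{-1}q\|_E$ small relative to a fixed scale — using that $x-\lambda^{-1}q$ restricted to a suitable spectral region of $x$ has norm close to $\|x\|_E=1$ only in a controlled way — and then combine with the order-continuous case via Theorem~\ref{main2} (the $B(\cH)$ UBCP result) applied to the reduced algebra $\cM_q$. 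Once the covering and the uniform bound on radii are in hand, the existence of $B_0$ follows because the BCP points, being either scalar multiples of nonzero projections or elements of $\cD$ of norm $\ge \tfrac12$ say, are uniformly bounded below in $\|\cdot\|_E$, so no covering ball meets a sufficiently small ball about $0$; this yields the UBCP.
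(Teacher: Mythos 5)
Your scaffolding (a countable set of centers obtained from separability of the trace, uniformly bounded radii, centers with norm bounded away from zero) matches the paper, but the case split at the heart of your covering argument has a genuine gap: the two families you propose do not exhaust the unit sphere, and the second family cannot be built as described. Under the hypotheses of the theorem there are unit vectors $x\notin E(\cM,\tau)^{b}$ with $\mu(\infty;x)=0$. For instance, take $\cM=L_\infty(0,\infty)$ with Lebesgue trace and $E$ the Marcinkiewicz space with $\psi(t)=\sqrt t$ (weak $L_2$): it has the Fatou property and satisfies $E^{oc}=E^{b}$ by Lemma \ref{lemmaoc}, yet the normalized function $t\mapsto t^{-1/2}$ lies on the sphere, has $\mu(\infty;x)=0$, and its distance to $E^{b}$ equals its norm (for any $g\in\cF(\tau)$ one checks $\limsup_{t\to\infty}\frac{1}{\sqrt t}\int_0^t\mu(s;x-g)\,ds=\norm{x}_E$). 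Such an $x$ is reached by neither of your families: it is not within a small ball centered at a point of $\cD\subset E^{b}$, and your recipe of centers $\lambda^{-1}q$ with $\lambda=\mu(\infty;x)$ is vacuous since $\lambda=0$. Even when $\mu(\infty;x)>0$, a generic sphere point is not close to any scalar multiple of a projection, so the isolated lemma you propose would fail; moreover your persistent-tail estimate $\norm{x-g}_E\ge c$ is a \emph{lower} bound on the distance to the candidate centers, which points in the wrong direction for a covering, and invoking Theorem \ref{main2} on reduced algebras does not help when $\cM$ is atomless. (A secondary point: density of $\{p_k\}$ in the measure topology does not by itself give $\norm{\cdot}_E$-density of $\cD$ in $E^{b}$; the paper instead quotes that separability of $\tau$ makes $E^{oc}$ separable and then uses the hypothesis $E^{oc}=E^{b}$.)

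The missing idea — and the reason the hypotheses are exactly what is needed — is the paper's doubling trick, which treats every sphere point uniformly, with no dichotomy between $E^{b}$ and its complement. Given $x=u|x|$ of norm one and $\varepsilon\in(0,\tfrac14)$, the Fatou norm yields a $\tau$-finite projection $p$ with $\norm{u|x|^{1/2}p|x|^{1/2}}_E\ge 1-\varepsilon\ge\tfrac34$, since $|x|^{1/2}p|x|^{1/2}\uparrow|x|$ along $\cF(\tau)$-projections. Because $-\mathbf{1}\le 2p-\mathbf{1}\le\mathbf{1}$, one has $|x|^{1/2}(2p-\mathbf{1})|x|^{1/2}\prec\prec|x|$, so strong symmetry gives $\norm{2u|x|^{1/2}p|x|^{1/2}-x}_E\le\norm{x}_E=1$. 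Thus the \emph{doubled} compression has norm at least $\tfrac32$ while lying within distance $1$ of $x$; it belongs to $\cF(\tau)\subset E^{b}=E^{oc}$, and since $E^{oc}$ is separable (the trace being separable) it can be replaced by an element $g$ of a fixed countable set with $\norm{g-x}_E<\tfrac98<\tfrac{11}{8}\le\norm{g}_E$. This produces countably many balls of uniformly bounded radius avoiding a fixed ball about the origin, i.e.\ the UBCP. The centers are deliberately taken near $2$ times a $\tau$-finite truncation of $x$ rather than near $x$ itself; that is precisely what captures the sphere points at distance comparable to $1$ from $E^{b}$, which your dichotomy cannot reach.
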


\begin{proof}
Without loss of generality, we may  assume that  $c_{E(\cM,\tau)}={\bf 1}$.

Since $\tau$ is separable, it follows that
$E(\cM,\tau)^{oc}$ is separable  \cite[Theorem 5.6.22]{DPS}.
Therefore, there exists a countable subset $\cA$ of $E(\cM,\tau)^{oc} $ such that for any $y\in E(\cM,\tau)^{oc}$ and any $\varepsilon>0$, there exists $z\in \cA$ such that $\norm{z-y}_E<\varepsilon$.

 Let $x$ be an arbitrary element in the  unit sphere of  $E(\cM,\tau)$ with polar decomposition $x=u|x|$.
Let $\varepsilon\in (0,\frac14)$.
Since $\norm{\cdot}_E$ is a Fatou norm, it follows that there exists a projection $p\in \cF(\tau) $   such that
\begin{align}\label{esti:x}
1=\norm{x}_E\ge \norm{u |x|^{1/2}p|x|^{1/2}}_E\ge \norm{x}_E -\varepsilon \ge \frac{3}{4} .
\end{align}
Note that $u |x|^{1/2}p|x|^{1/2} \in \cF(\tau)\subset E(\cM,\tau)^b =E(\cM,\tau)^{oc} \subset E(\cM,\tau) $ and $-{\bf 1}\le 2p-{\bf 1} \le {\bf 1}$.
Therefore,  we have \cite[Proposition 1(iii)]{DP2014}
$$-  |x| = - |x|^{1/2}  {\bf 1} |x|^{1/2 } \le |x|^{1/2} (2p-{\bf 1})|x|^{1/2} \le |x|^{1/2}  {\bf 1} |x|^{1/2} =  |x|. $$
It follows from \cite{FC88} (see also \cite[Lemma 4.2]{HSZ})
that
$$ |x|^{1/2} (2p-{\bf 1})|x|^{1/2}  \prec \prec  |x|. $$
We have $|x|^{1/2} (2p-{\bf 1})|x|^{1/2} =|x|^{1/2}  2p |x|^{1/2}-|x| \stackrel{\tiny \mbox{\cite[Prop.3(iii)]{DP2014}}}{\in} E(\cM,\tau)$ and
\begin{align}\label{2u}
\norm{2u |x|^{1/2}p|x|^{1/2}  - x}_E =\norm{ u |x|^{1/2}( 2p- {\bf 1} ) |x|^{1/2}}_E\le \norm{ x}_E =1   . 
\end{align}
By the separability of $ E(\cM,\tau)^{oc}$,
there exists an element $g\in \cA\subset E(\cM,\tau)^{oc}$ such that
$$\norm{g- 2u |x|^{1/2}p|x|^{1/2} }_E <\frac18   .$$
This together with \eqref{esti:x} implies that 
$$\frac{11}{8}\le   \norm{ 2u |x|^{1/2}p|x|^{1/2} }_E -\frac18 \le \norm{g }_E \le   \norm{ 2u |x|^{1/2}p|x|^{1/2} }_E +\frac18 \le\frac{17}{8}.$$
On the other hand, by the triangular inequality, we have 
\begin{align*}
\norm{g-x}_E &\le \norm{g- 2u |x|^{1/2}p|x|^{1/2} }_E+\norm{2u |x|^{1/2}p|x|^{1/2}  - x}_E   \\
&\stackrel{\eqref{2u}}{<}\frac18  + 1\\
&\le   \frac98 <\frac{11}{8}\le \norm{g}_E,
\end{align*}
which completes the proof.
\end{proof}

Recall that any symmetric function space having the Fatou norm is a strongly symmetric space, see e.g. \cite[Corollary 4.5.7]{DPS} and \cite{KPS}. The following result is a immediate consequence of \cite[Proposition 56]{DP2014} and Theorem \ref{main} above.
\begin{cor}
Let $E(0,\infty)$ be a symmetric function space whose norm is a Fatou norm, and let $\cM$ be a von Neumann algebra equipped with a
separable semifinite faithful normal trace $\tau$.
If $$E(0,\infty)^{oc}\ne \{0\} ,$$  then
$E(\cM,\tau)$ has the UBCP.
\end{cor}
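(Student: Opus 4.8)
The plan is to deduce this corollary directly from Theorem~\ref{main}, so the whole argument is a matter of checking that the hypotheses of that theorem are in force for the operator space $E(\cM,\tau)$ attached to the given function space $E(0,\infty)$. The trace $\tau$ is separable by assumption, exactly as required by Theorem~\ref{main}, so it remains to verify three things about $E(\cM,\tau)$: that it is a strongly symmetric space, that $\norm{\cdot}_E$ is a Fatou norm on it, and that $E(\cM,\tau)^{oc}=E(\cM,\tau)^{b}$. Once these are in place, Theorem~\ref{main} yields the UBCP and we are done.

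For the first two points I would argue purely by transfer from the function-space side. Since $\norm{\cdot}_E$ is a Fatou norm on $E(0,\infty)$, that space is strongly symmetric (as recalled just above, cf.\ \cite[Corollary 4.5.7]{DPS} and \cite{KPS}); and both strong symmetry and the Fatou property carry over from $E(0,\infty)$ to the associated operator space $E(\cM,\tau)$ under the Kalton--Sukochev correspondence (\cite{Kalton_S}, \cite{DP2014}), which I would invoke rather than reprove. I would also record that, since $E(\cM,\tau)$ arises from a symmetric function space, its carrier projection $c_{E(\cM,\tau)}$ is automatically $\mathbf{1}$ (as noted earlier in this section), so no separate normalisation step is needed before applying Theorem~\ref{main}.

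For the third point, $E(\cM,\tau)^{oc}=E(\cM,\tau)^{b}$, this is precisely the content supplied by \cite[Proposition 56]{DP2014}: under the Fatou hypothesis on $E(0,\infty)$, the assumption $E(0,\infty)^{oc}\ne\{0\}$ forces the coincidence of the order-continuous part and the $\tau$-compact closure at the operator-space level. With all three hypotheses of Theorem~\ref{main} verified, the conclusion follows. I do not expect a genuine obstacle here, since the corollary is just a packaging of Theorem~\ref{main} with two external inputs; the only places requiring care are making sure the strong-symmetry and Fatou properties are invoked for $E(\cM,\tau)$ and not merely for $E(0,\infty)$, and matching the exact formulation of \cite[Proposition 56]{DP2014} so that $E(0,\infty)^{oc}\ne\{0\}$ delivers exactly the identity $E(\cM,\tau)^{oc}=E(\cM,\tau)^{b}$. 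All the substantive work — the explicit construction of the countable family of off-origin balls of bounded radius covering the unit sphere — already lies inside the proof of Theorem~\ref{main}.
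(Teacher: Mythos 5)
Your proposal is correct and matches the paper's own route: the paper also obtains this corollary as an immediate consequence of Theorem~\ref{main} combined with \cite[Proposition 56]{DP2014}, with the transfer of strong symmetry, the Fatou property and $c_{E(\cM,\tau)}=\mathbf{1}$ from $E(0,\infty)$ to $E(\cM,\tau)$ supplied by \cite{Kalton_S,DP2014} exactly as you indicate. Your write-up simply makes explicit the hypothesis-checking that the paper leaves implicit.
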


\begin{cor}Assume  $\cM $ is   a von Neumann algebrea   equipped with a
semifinite faithful normal trace $\tau$. 
  Let $E(\cM,\tau)$ be a strongly symmetric space affiliated with $\cM$
 having order continuous norm. 
  Then, $E(\cM,\tau)$ has the UBCP/BCP if and only if it is separable.
  In particular, $L_p(\cM,\tau)$ has the UBCP/BCP if and only if it is separable if and only if $\cM$ can be represented on a separable Hilbert space.
\end{cor}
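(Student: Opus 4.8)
The plan is to establish the implications ``$E(\cM,\tau)$ separable $\Rightarrow$ UBCP'', ``UBCP $\Rightarrow$ BCP'' (trivial) and ``BCP $\Rightarrow$ separable'', and then deduce the $L_p$-statement. For ``separable $\Rightarrow$ UBCP'' I would use the general Banach space fact: if $\{x_n\}_{n\ge 1}$ is a dense sequence in the unit sphere of a separable space, then the closed balls $B\bigl(\tfrac32 x_n,\tfrac34\bigr)$ all have radius $\tfrac34$, none of them contains $0$ (since $\norm{\tfrac32 x_n}=\tfrac32>\tfrac34$), their union covers the unit sphere (if $\norm{y}=1$ and $\norm{y-x_n}<\tfrac14$ then $\norm{y-\tfrac32 x_n}\le\norm{y-x_n}+\tfrac12<\tfrac34$), and all of them are disjoint from the open ball $B(0,\tfrac34)$; this is exactly the UBCP, which trivially implies the BCP.

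For ``BCP $\Rightarrow$ separable'': first I would pass to the central summand $\cM_{c_E}=c_E\cM$, which changes neither $E(\cM,\tau)$, nor its norm, nor the BCP, nor separability, so we may assume $c_{E(\cM,\tau)}=\mathbf 1$. If the trace $\tau$ is separable, then $\cM$ is a von Neumann subalgebra of $B(K)$ for some separable Hilbert space $K$, and since $\norm{\cdot}_E$ is order continuous, $E(\cM,\tau)$ is separable, as recalled before the statement. So assume $\tau$ is not separable; then either $\cM$ is not $\sigma$-finite, or $\cM$ is $\sigma$-finite with non-separable predual (a $\sigma$-finite algebra with separable predual acts on a separable Hilbert space, which would force $\tau$ to be separable). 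In the first case Proposition~\ref{prop:noncou}, applicable because $c_E=\mathbf 1$ and $E(\cM,\tau)\ne\{0\}$, shows that $E(\cM,\tau)$ fails the BCP; in the second case the same conclusion follows from Proposition~\ref{prop:nonseparable}, once one knows that $E(\cM,\tau)$ is fully symmetric. Either way we contradict the BCP, so $\tau$ is separable and hence $E(\cM,\tau)$ is separable.

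For the final assertion: for $1\le p<\infty$ the space $L_p(\cM,\tau)$ is strongly symmetric with order continuous norm and carrier $\mathbf 1$, so by the first part it has the UBCP/BCP if and only if it is separable. And $L_p(\cM,\tau)$ is separable if and only if $\cM$ acts on a separable Hilbert space: the ``if'' direction is the recalled separability criterion, while for ``only if'' the separability of $L_p(\cM,\tau)$ makes the subset $\{p\in\cP(\cM):\tau(p)<\infty\}\subseteq\cF(\tau)\subseteq L_p(\cM,\tau)$ separable in $\norm{\cdot}_p$, hence (since $\norm{\cdot}_p$-convergence of projections implies convergence in measure) separable in the measure topology, i.e.\ $\tau$ is separable, and therefore $\cM$ acts on a separable Hilbert space.

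The step I expect to need the most care is ``strongly symmetric with order continuous norm $\Rightarrow$ fully symmetric'', used to invoke Proposition~\ref{prop:nonseparable}. If a clean reference is not at hand, I would instead re-run the proof of Proposition~\ref{prop:nonseparable} using only strong symmetry: the $\tau$-preserving normal conditional expectation $\cE$ onto the countably generated $\sigma$-finite (hence separably acting) subalgebra $\cN$ carries $\cF(\tau)$ into $L_1\cap\cM\subseteq E(\cM,\tau)$ and satisfies $\cE(x)\prec\prec x$, so strong symmetry gives $\norm{\cE(x)}_E\le\norm{x}_E$ on the $\norm{\cdot}_E$-dense subspace $\cF(\tau)$; hence $\cE$ extends to a contractive projection of $E(\cM,\tau)$ onto the separable subspace $E(\cN,\tau|_\cN)$, and choosing $a\in\cF(\tau)\setminus\cN$ and $g=(a-\cE(a))/\norm{a-\cE(a)}_E$ produces a unit-sphere element with $\norm{g-x_n}_E\ge\norm{\cE(g-x_n)}_E=\norm{x_n}_E$ for every $n$, which no covering of the sphere by balls off the origin can accommodate.
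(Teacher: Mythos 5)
Your proposal is correct and follows essentially the same route as the paper: separability gives the UBCP by the standard covering with balls centered at multiples of a dense sequence, while the converse is the contrapositive via Propositions~\ref{prop:noncou} and \ref{prop:nonseparable} (after reducing to $c_{E(\cM,\tau)}=\mathbf 1$ and using that a strongly symmetric space with order continuous norm is fully symmetric, plus the fact that order continuity together with a separable trace/predual forces separability of $E(\cM,\tau)$). Your version is in fact slightly more careful than the paper's, since you split explicitly into the non-$\sigma$-finite case (handled by Proposition~\ref{prop:noncou}) and the $\sigma$-finite case with non-separable predual (Proposition~\ref{prop:nonseparable}), and you supply the argument that separability of $L_p(\cM,\tau)$ forces $\tau$ to be separable.
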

\begin{proof}
Without loss of generality, we may  assume that  $c_{E(\cM,\tau)}={\bf 1}$.

Note that every strong symmetric space having order continuous norm  is fully symmetric \cite[Corollary 5.3.6]{DPS}.

If $E(\cM,\tau)$ is not separable, then $\cM_*$ is not separable, see e.g. \cite{Me} or \cite[Theorem 5.6.22]{DPS}. 
By Proposition \ref{prop:nonseparable}, $E(\cM,\tau)$ fails the BCP. 
On the other hand, all separable Banach spaces have the UBCP. This completes the proof.
\end{proof}

For general symmetric spaces $E(\cM,\tau)$, we only have  $E(\cM,\tau)^{oc}\subset E(\cM,\tau)^b$~\cite[Proposition 46]{DP2014}.
The following lemma  characterizes those symmetric spaces  $E(\cM,\tau)$ such that   $E(\cM,\tau)^{oc}=E(\cM,\tau)^b $, see \cite[Proposition 5.4.8]{DPS}  or \cite[Proposition 48]{DP2014}.
\begin{lemma}\label{lemmaoc}
Let $E(\cM,\tau)$ be a strongly symmetric space affiliated with a semifinite faithful normal trace $\tau$.
Assume that $c_{E(\cM,\tau)}={\bf 1}$.
Consider the following two conditions:\begin{enumerate}
                                              \item  $\norm{p_n}_E\to 0$ whenever $\tau(p_n)\to 0$ as $n\to \infty$;

                                              \item $E(\cM,\tau)^{oc}=E(\cM,\tau)^b.$
                                            \end{enumerate}
The implication $(1)\Rightarrow (2)$ is always valid. If the von Neumann algebra is atomless, then we have   $(1)\Leftrightarrow (2)$.
\end{lemma}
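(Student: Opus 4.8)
The plan is to follow the standard machinery for order-continuous parts of symmetric spaces, as developed in \cite{DP2014,DPS}, and to isolate the one place where atomlessness is actually used. For the implication $(1)\Rightarrow(2)$, recall that we always have $E(\cM,\tau)^{oc}\subseteq E(\cM,\tau)^{b}$ by \cite[Proposition 46]{DP2014}, so only the reverse inclusion needs proof. Given $x\in E(\cM,\tau)^{b}=\overline{\cF(\tau)}^{b}$, I would approximate $x$ in $\|\cdot\|_E$ by elements of $\cF(\tau)$ and show that every element of $\cF(\tau)$ lies in $E(\cM,\tau)^{oc}$, using that $E(\cM,\tau)^{oc}$ is a closed subspace. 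So the core is: if $a\in\cF(\tau)$ and $|a|\ge x_n\downarrow 0$, then $\|x_n\|_E\downarrow 0$. Since $a\in\cF(\tau)$, we have $x_n\le \|a\|_\infty\, s(a)$ with $\tau(s(a))<\infty$; because $x_n\downarrow 0$ in the strong operator topology and all $x_n$ are dominated by the $\tau$-finite projection $s(a)$, one gets $\tau(e^{x_n}(\varepsilon,\infty))\to 0$ for every $\varepsilon>0$, i.e.\ $x_n\to 0$ in measure, and moreover $\mu(t;x_n)\to 0$ for each fixed $t$. Writing $x_n\le \varepsilon s(a) + \|a\|_\infty\, e^{x_n}(\varepsilon,\infty)$ and invoking the lattice property of $\|\cdot\|_E$ together with hypothesis $(1)$ applied to the projections $e^{x_n}(\varepsilon,\infty)$ (whose traces tend to $0$), one obtains $\limsup_n\|x_n\|_E\le \varepsilon\,\|s(a)\|_E$, and letting $\varepsilon\to 0$ finishes it. This shows $\cF(\tau)\subseteq E(\cM,\tau)^{oc}$, hence $E(\cM,\tau)^{b}\subseteq E(\cM,\tau)^{oc}$.

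For the converse $(2)\Rightarrow(1)$ under the assumption that $\cM$ is atomless, I would argue by contrapositive: suppose $(1)$ fails, so there is a sequence of projections $p_n$ with $\tau(p_n)\to 0$ but $\|p_n\|_E\ge\delta>0$ for all $n$. The goal is to manufacture from these an element of $E(\cM,\tau)^{b}$ that is not in $E(\cM,\tau)^{oc}$. The natural candidate is to use atomlessness to arrange the $p_n$ (after passing to a subsequence and possibly shrinking them, using that $\tau$ is atomless so every $\tau$-finite projection can be split into pieces of prescribed small trace) to be \emph{mutually orthogonal}, and then to consider $x:=\sum_n c_n p_n$ for suitable scalars $c_n\downarrow 0$; alternatively, and more robustly, consider the sequence $x_k := \bigvee_{n\ge k}p_n$ or a rearranged single operator whose singular value function dominates each $\mu(p_n)$. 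One then checks that such an $x$ lies in $E(\cM,\tau)^{b}$ (it is a $\|\cdot\|_E$-limit of its $\tau$-finite truncations, by a summability choice of the $c_n$) while the tails $x_k\downarrow 0$ fail $\|x_k\|_E\to 0$ because each $x_k$ dominates some $c_n p_n$ with $\|c_n p_n\|_E$ bounded below — contradicting that $x\in E(\cM,\tau)^{oc}$. The atomlessness enters precisely in the disjointification step: in an atomic algebra with atoms of unequal trace one cannot in general replace a sequence of small-trace projections by mutually orthogonal ones of comparable $E$-norm, which is why the equivalence can fail there.

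The main obstacle I anticipate is the disjointification/rearrangement step in $(2)\Rightarrow(1)$: turning the given sequence $\{p_n\}$ with $\tau(p_n)\to 0$ and $\|p_n\|_E\ge\delta$ into a concrete single operator $x$ that simultaneously (a) belongs to $E(\cM,\tau)^{b}$ and (b) witnesses failure of order continuity. Care is needed because the $p_n$ need not be comparable or orthogonal, and $\|p_n\|_E$ depends only on $\tau(p_n)$ for $\cM$ atomless with $c_E=\mathbf 1$ — indeed in the atomless case $\|p\|_E=\varphi_E(\tau(p))$ for the fundamental function $\varphi_E$, so the hypothesis $\|p_n\|_E\ge\delta$ forces $\varphi_E(t)\not\to 0$ as $t\to 0^+$, i.e.\ $\varphi_E(0^+)>0$; this is the clean reformulation of $\neg(1)$ that should make the construction of $x$ transparent (take $x$ with $\mu(x)$ a decreasing step function of small support but not order continuous at $0$, e.g.\ $\mu(t;x)\equiv 1$ on $(0,1)$ suitably truncated). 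Everything else — the lattice estimates, closedness of $E(\cM,\tau)^{oc}$, and the measure-convergence facts — is routine and can be quoted from \cite{DP2014,DPS}.
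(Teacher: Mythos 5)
The paper gives no proof of Lemma~\ref{lemmaoc} at all: it is quoted from \cite[Proposition 5.4.8]{DPS} and \cite[Proposition 48]{DP2014}, so your argument can only be judged on its own merits and against those standard proofs. Your half $(1)\Rightarrow(2)$ is correct and is the expected argument: since $E(\cM,\tau)^{oc}$ is a norm-closed subspace contained in $E(\cM,\tau)^{b}$, it suffices to show $\cF(\tau)\subseteq E(\cM,\tau)^{oc}$, and your estimate $x_n\le \varepsilon\, s(a)+\norm{a}_\infty e^{x_n}(\varepsilon,\infty)$ does that, once you note that $\tau(e^{x_n}(\varepsilon,\infty))\le \tau(x_n)/\varepsilon\downarrow 0$ by normality of $\tau$ (the $x_n$ are dominated by $\norm{a}_\infty s(a)$, which has finite trace), that $s(a)\in\cF(\tau)\subseteq E(\cM,\tau)$ because $c_{E(\cM,\tau)}={\bf 1}$, and that hypothesis (1) applies to the projections $e^{x_n}(\varepsilon,\infty)$.

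The $(2)\Rightarrow(1)$ half is where there is a genuine gap. The first construction you propose is internally inconsistent: with $c_n\downarrow 0$ one has $\norm{c_np_n}_E=c_n\norm{p_n}_E\to 0$, so the tails do not dominate elements of $E$-norm bounded below and no contradiction results. Your alternative $x_k:=\bigvee_{n\ge k}p_n$ is the right move, but you leave it unfinished; it closes as follows: pass to a subsequence with $\sum_k\tau(p_{n_k})<\infty$, put $q_m:=\bigvee_{k\ge m}p_{n_k}$, note $\tau(q_1)\le\sum_k\tau(p_{n_k})<\infty$ so $q_1\in\cF(\tau)\subseteq E(\cM,\tau)^{b}$, note $q_m\downarrow$ with $\tau(\bigwedge_m q_m)\le \tau(q_m)\to 0$, hence $q_m\downarrow 0$ by faithfulness, while $q_m\ge p_{n_m}$ gives $\norm{q_m}_E\ge\delta$; thus $q_1\notin E(\cM,\tau)^{oc}$ and (2) fails. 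Observe that this uses neither disjointification nor atomlessness, so your diagnosis that atomlessness enters in a disjointification step is off target (this is harmless, since the lemma only asserts the implication under that extra hypothesis, which is how the quoted propositions are formulated). Finally, your ``fundamental function'' reformulation $\varphi_E(0^+)>0$ is a correct restatement of $\neg(1)$ in the atomless case, but ``take $x$ with $\mu(x)$ a decreasing step function'' is not a construction: you must still exhibit a concrete witness, e.g.\ a fixed nonzero $\tau$-finite projection $p$ together with subprojections $q_n\downarrow 0$, $\tau(q_n)\to 0$ (this is where atomlessness would actually be used), so that $p\in\cF(\tau)\setminus E(\cM,\tau)^{oc}$.
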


\begin{cor}\label{atomlessnoinfty}
Let $\cM$ be an    atomless von Neumann algebra equipped with a separable semifinite faithful normal trace $\tau$.
Let $E(0,\infty)$ be a symmetric function space with a Fatou norm.
If the restriction of $E(0,\infty)$ on $(0,1)$ does not coincides with $L_\infty(0,1)$, then   $E(\cM,\tau)$ has   the UBCP.

\end{cor}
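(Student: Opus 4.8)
The plan is to deduce Corollary~\ref{atomlessnoinfty} directly from Theorem~\ref{main} by verifying its hypotheses for the operator space $E(\cM,\tau)$ built from a symmetric function space $E(0,\infty)$ with a Fatou norm. First I would recall that, by \cite[Corollary 4.5.7]{DPS}, a symmetric function space with the Fatou norm is strongly symmetric, and the construction $E(\cM,\tau)=\{x\in S(\cM,\tau):\mu(x)\in E(0,\infty)\}$ produces a strongly symmetric operator space whose norm inherits the Fatou property (the Fatou norm passes from $E(0,\infty)$ to $E(\cM,\tau)$ because $x_\alpha\uparrow x$ in the operator space forces $\mu(x_\alpha)\uparrow\mu(x)$ in the function space). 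Since $\tau$ is separable by assumption, the only remaining hypothesis of Theorem~\ref{main} to check is the equality $E(\cM,\tau)^{oc}=E(\cM,\tau)^b$.

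Next I would invoke Lemma~\ref{lemmaoc}: for an atomless von Neumann algebra, $E(\cM,\tau)^{oc}=E(\cM,\tau)^b$ is equivalent to the condition that $\norm{p_n}_E\to 0$ whenever $\tau(p_n)\to 0$. So the task reduces to showing that the hypothesis ``$E(0,\infty)|_{(0,1)}\ne L_\infty(0,1)$'' forces this projection-continuity condition. Here I would use the characteristic functions: if $\tau(p_n)=t_n\to 0$, then $\mu(p_n)=\chi_{(0,t_n)}$, so $\norm{p_n}_E=\norm{\chi_{(0,t_n)}}_{E(0,\infty)}$. The quantity $\varphi_E(t):=\norm{\chi_{(0,t)}}_E$ is the fundamental function of $E$; it is nondecreasing and (by the Fatou norm) continuous from the left, and I want $\varphi_E(t)\to 0$ as $t\to 0^+$. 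The point is that $\lim_{t\to 0^+}\varphi_E(t)=c>0$ would force $E(0,1)=L_\infty(0,1)$ with equivalent norms: indeed $\norm{f}_\infty\cdot c\le\norm{f}_E$ for all simple $f$ supported on a set of small measure, and combined with the reverse inequality coming from $\chi_{(0,1)}\in E$ and symmetry/Fatou one gets $E(0,1)=L_\infty(0,1)$ as sets with equivalent norms (using that $\varphi_E$ is bounded below and above on $(0,1]$). This is exactly the forbidden case, so $\varphi_E(0^+)=0$, giving condition~(1) of Lemma~\ref{lemmaoc}, hence condition~(2), and Theorem~\ref{main} applies to conclude that $E(\cM,\tau)$ has the UBCP.

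I would also record the harmless reduction to $c_{E(\cM,\tau)}={\bf 1}$: since $E(0,\infty)$ is a genuine symmetric function space, the carrier projection of $E(\cM,\tau)$ is automatically ${\bf 1}$ (as noted in the discussion preceding Definition~\ref{opspace}, citing \cite{DP2014,Kalton_S,DPS}), so no adjustment is needed. One should be slightly careful that the trace $\tau$ may be finite, in which case ``restriction to $(0,1)$'' should be read as the relevant initial segment of the trace; but since $\tau$ is separable and $\cM$ is atomless, the atomless part of $(0,\tau({\bf 1}))$ contains an interval on which the comparison with $L_\infty$ is meaningful, and the argument goes through verbatim.

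The main obstacle I anticipate is the precise argument that $\varphi_E(0^+)>0$ implies $E(0,1)=L_\infty(0,1)$ with equivalent norms, i.e.\ pinning down that a fundamental function bounded away from zero near the origin collapses the space to $L_\infty$. One has to handle functions whose support has measure close to $1$ (not just small support) to get the full two-sided norm equivalence; the upper bound $\norm{f}_E\le\varphi_E(1)\norm{f}_\infty$ follows from symmetry and solidity, while the lower bound $\norm{f}_E\ge\varphi_E(0^+)\norm{f}_\infty$ needs a short argument splitting a top-level set of $|f|$ and using left-continuity of $\varphi_E$. None of this is deep, but it is the one place where the hypothesis is actually used, so it deserves to be spelled out carefully; everything else is a direct citation chain through Lemma~\ref{lemmaoc} and Theorem~\ref{main}.
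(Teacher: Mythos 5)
Your proposal is correct and follows essentially the same route as the paper: Fatou norm gives strong symmetry, the hypothesis $E(0,\infty)|_{(0,1)}\ne L_\infty(0,1)$ yields $\norm{\chi_{(0,\delta)}}_E\to 0$ as $\delta\to 0^+$, Lemma~\ref{lemmaoc} then gives $E^{oc}=E^{b}$, and Theorem~\ref{main} concludes the UBCP. The only (harmless) differences are that the paper applies Lemma~\ref{lemmaoc} to the function space $E(0,\infty)$ and transfers $E(\cM,\tau)^{oc}=E(\cM,\tau)^{b}$ via \cite[Proposition 56]{DP2014}, whereas you apply the lemma directly to $E(\cM,\tau)$ (legitimate, since only the implication $(1)\Rightarrow(2)$ is needed), and the fundamental-function fact you propose to prove by hand is simply cited by the paper from \cite[Chapter 2.a]{LT2}; your worry about finite trace is a non-issue since the hypothesis concerns the function space $E(0,\infty)$ itself.
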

\begin{proof}
Note that a symmetric function space having a  Fatou norm is strongly symmetric, see e.g. \cite[Corollary 4.5.7]{DPS} and \cite{KPS}.
In particular, $E(\cM,\tau)$ is a strongly symmetric space having a Fatou norm\cite[Theorems 51 and 54]{DP2014} and $c_{E(\cM,\tau)}={\bf 1}$\cite[p.245]{DP2014}. 

Since  the restriction of $E(0,\infty)$ on $(0,1)$ does not coincides with $L_\infty(0,1)$, it follows that
\begin{align*}
\norm{\chi_{(0,\delta)}}_E\to 0
\end{align*}
as $\delta\to 0$, see e.g. \cite[Chapter 2.a]{LT2}.
By Lemma \ref{lemmaoc}, we have $$E(0,\infty)^{oc}=E(0,\infty)^{b}. $$
In particular, $E(0,\infty)^{oc}\ne \{0\}$. 
By \cite[Proposition 56]{DP2014}, we obtain that 
$$E(\cM,\tau)^{oc}=E(\cM,\tau)^{b}$$
Now, the assertion is an immediate consequence of Theorem \ref{main}.
\end{proof}

  \begin{corollary}
  If $X$ is a symmetric function space having the Fatou norm satisfying one of the followings:
  \begin{enumerate}
   \item $X=E(0,1)$  which does not coincides with $L_\infty(0,1)$;
    \item $X=E(0,\infty)$   which does not coincides with $L_\infty(0,1)$ on $(0,1)$,
     \end{enumerate}
  then $X$ has the UBCP.
\end{corollary}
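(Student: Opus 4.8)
The plan is to read this off from Corollary~\ref{atomlessnoinfty} (equivalently, from Theorem~\ref{main} together with Lemma~\ref{lemmaoc}) by taking the underlying von Neumann algebra to be commutative. Concretely, I would set $\cM=L_\infty(0,a)$ with $a=1$ in case~(1) and $a=\infty$ in case~(2), acting by multiplication on $L_2(0,a)$, and let $\tau$ be Lebesgue integration. Then $\cM$ is atomless and, being represented on a separable Hilbert space, has a separable trace by \cite[Proposition~5.6.17]{DPS}; moreover, under the identification $S(\cM,\tau)=S(0,a)$ recalled in Section~\ref{s:p} the abstract symmetric operator space $E(\cM,\tau)$ of Definition~\ref{opspace} is exactly the function space $X$ (in case~(1) one uses that $\tau(\mathbf 1)=1$ forces $\mu(t;x)=0$ for $t\ge 1$, so $X=E(0,1)$ matches $E(\cM,\tau)$ isometrically).

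Next I would verify the hypotheses of Theorem~\ref{main}. Since $X$ has a Fatou norm it is strongly symmetric by \cite[Corollary~4.5.7]{DPS}, and its carrier projection is $\mathbf 1$ because $\cM$ is atomless, so Lemma~\ref{lemmaoc} applies. The assumption that $X$ does not coincide with $L_\infty(0,1)$ (on $(0,1)$) yields $\norm{\chi_{(0,\delta)}}_X\to 0$ as $\delta\downarrow 0$, see e.g.\ \cite[Chapter~2.a]{LT2}; since every projection of $L_\infty(0,a)$ is a characteristic function whose $E(\cM,\tau)$-norm equals $\norm{\chi_{(0,\tau(p))}}_X$ by symmetry, this is precisely condition~(1) of Lemma~\ref{lemmaoc}, whence $E(\cM,\tau)^{oc}=E(\cM,\tau)^b$. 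In case~(2) one may instead transfer $E(0,\infty)^{oc}=E(0,\infty)^b$ via \cite[Proposition~56]{DP2014} and invoke Corollary~\ref{atomlessnoinfty} verbatim.

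With $X=E(\cM,\tau)$ strongly symmetric, Fatou, and satisfying $E(\cM,\tau)^{oc}=E(\cM,\tau)^b$, Theorem~\ref{main} gives at once that $X$ has the UBCP. I expect no genuine obstacle: the statement is the commutative shadow of Corollary~\ref{atomlessnoinfty}, and the only point requiring a little care is the bookkeeping of the two identifications above — matching $E(\cM,\tau)$ with $X$ for $\cM=L_\infty(0,a)$ and, in case~(1), treating $E(0,1)$ as a symmetric operator space over the finite von Neumann algebra $L_\infty(0,1)$ — both immediate from the constructions recalled in Section~\ref{s:p} and the definition of a Fatou norm. All of the analytic content has already been established in Theorem~\ref{main}, Lemma~\ref{lemmaoc} and Corollary~\ref{atomlessnoinfty}.
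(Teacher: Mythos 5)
Your proposal is correct and follows exactly the route the paper intends: the corollary is the commutative specialization of Corollary~\ref{atomlessnoinfty} (equivalently Theorem~\ref{main} plus Lemma~\ref{lemmaoc}), obtained by taking $\cM=L_\infty(0,1)$ or $L_\infty(0,\infty)$ with Lebesgue integration, whose trace is separable, and using that $X\ne L_\infty(0,1)$ (on $(0,1)$) forces $\norm{\chi_{(0,\delta)}}_X\to 0$ and hence $E(\cM,\tau)^{oc}=E(\cM,\tau)^b$. The identifications you flag are indeed the only bookkeeping needed, so the argument matches the paper's.
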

Recall that for atomic von Neumann algebra with all minimal projections having the same   trace, the condition $E(\cM,\tau)^{oc}=E(\cM,\tau)^b$ is always true, see e.g. \cite[Proposition 49]{DP2014}.
The situation changes when the traces of   minimal projections having are not necessarily the same.
For example, if $\ell_\infty(\mathbb{N})$ is equipped with the measure $\nu$ such that $\nu(e_n)=\frac{1}{2^n}$, then $${\bf 1}\in E(\cM,\tau)^b=\cF(\tau)  \mbox{ but } {\bf 1}\notin E(\cM,\tau)^{oc} . $$ 
However, the following result shows that for an atomic  von Neumann algebra $\cM$, the Fatou norm is sufficient  for a symmetric space $E(\cM,\tau)$ to have the UBCP.

\begin{theorem}
\label{main2}
Let $\cM$ be an atomic  von Neumann algebra equipped with a separable semifinite faithful normal trace $\tau$. Assume that     $E(\cM,\tau)$ is  a strongly symmetric  space with a Fatou norm,
then $E(\cM,\tau)$ has   the UBCP.
\end{theorem}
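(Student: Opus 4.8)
The plan is to repeat the proof of Theorem~\ref{main} with a single change forced by atomicity: the inner truncation of $x$ must be carried out by a projection with a \emph{finite-dimensional} corner, not merely a $\tau$-finite one, so that it lands in $E(\cM,\tau)^{oc}$ even when $E(\cM,\tau)^{oc}\subsetneq E(\cM,\tau)^b$ (as happens for atomic algebras whose minimal projections have different traces). As usual we may assume $c_{E(\cM,\tau)}=\mathbf 1$; then $E(\cM,\tau)$ is strongly symmetric with Fatou norm, and, since $\tau$ is separable, $E(\cM,\tau)^{oc}$ is separable by \cite[Theorem 5.6.22]{DPS} (this does not use $E(\cM,\tau)^{oc}=E(\cM,\tau)^b$). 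Fix a countable dense subset $\cA$ of $E(\cM,\tau)^{oc}$.

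The basic observation is that, $\cM$ being atomic, the projections $p\in\cP(\cM)$ with $p\cM p$ finite-dimensional (equivalently, the finite orthogonal sums of minimal projections of $\cM$) form an upward directed family whose supremum is $\mathbf 1$; moreover this property of a projection is inherited by its subprojections and by Murray--von Neumann equivalent projections. Given $x$ in the unit sphere of $E(\cM,\tau)$ with polar decomposition $x=u|x|$ and $\varepsilon\in(0,\tfrac18)$, I consider the family of projections $p$ for which $p\le e^{|x|}(0,\lambda]$ for some $\lambda>0$ and $p\cM p$ is finite-dimensional. Its supremum is $e^{|x|}(0,\infty)=r(x)$, so $|x|^{1/2}p|x|^{1/2}\uparrow|x|$ along this family, and, the norm being a Fatou norm, we may choose such a $p$ with $\norm{\,|x|^{1/2}p|x|^{1/2}}_E\ge\norm{x}_E-\varepsilon=1-\varepsilon$. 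Put $y:=u|x|^{1/2}p|x|^{1/2}$. The spectral cap makes $|x|^{1/2}p$ bounded, so $y\in\cM$ and $|y|=|x|^{1/2}p|x|^{1/2}$; its support $s(|y|)=r(p|x|^{1/2})$ is Murray--von Neumann equivalent to the subprojection $l(p|x|^{1/2})\le p$, hence $s(|y|)\cM s(|y|)$ is finite-dimensional. Therefore any $z_k$ with $|y|\ge z_k\downarrow_k 0$ lies in the finite-dimensional algebra $s(|y|)\cM s(|y|)$, so $\norm{z_k}_\infty\to0$ and $\norm{z_k}_E\le\norm{z_k}_\infty\norm{s(|y|)}_E\to0$; that is, $y\in E(\cM,\tau)^{oc}$, while $\norm{y}_E=\norm{\,|x|^{1/2}p|x|^{1/2}}_E\ge1-\varepsilon$.

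From here everything is as in Theorem~\ref{main}. Since $-\mathbf 1\le 2p-\mathbf 1\le\mathbf 1$, \cite[Proposition 1(iii)]{DP2014} gives $-|x|\le|x|^{1/2}(2p-\mathbf 1)|x|^{1/2}\le|x|$, hence $|x|^{1/2}(2p-\mathbf 1)|x|^{1/2}\prec\prec|x|$ by \cite{FC88} (see also \cite[Lemma 4.2]{HSZ}), and strong symmetry yields $\norm{2y-x}_E=\norm{u|x|^{1/2}(2p-\mathbf 1)|x|^{1/2}}_E\le\norm{x}_E=1$. Choosing $g\in\cA$ with $\norm{g-2y}_E<\tfrac18$, we get $\tfrac{13}{8}\le 2(1-\varepsilon)-\tfrac18\le\norm{g}_E\le 2+\tfrac18=\tfrac{17}{8}$ and $\norm{g-x}_E\le\norm{g-2y}_E+\norm{2y-x}_E<\tfrac18+1=\tfrac98<\tfrac{13}{8}\le\norm{g}_E$, so $x$ lies in the ball $B(g,\tfrac98)$, which does not contain the origin. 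As $x$ ranges over the unit sphere, the countable family $\{B(g,\tfrac98):g\in\cA,\ \norm{g}_E\ge\tfrac{13}{8}\}$ covers it; all radii equal $\tfrac98$, and $B(0,\tfrac18)$ is disjoint from every member because $\norm{g}_E\ge\tfrac{13}{8}>\tfrac98+\tfrac18$. Hence $E(\cM,\tau)$ has the UBCP.

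The one substantive point beyond Theorem~\ref{main} is ensuring that the truncation $y$ belongs to the separable space $E(\cM,\tau)^{oc}$ rather than only to $\cF(\tau)$; this is precisely where atomicity is used (it provides the finite-dimensional-corner projections increasing to $\mathbf 1$, which fails in the atomless setting, cf. Corollary~\ref{atomlessnoinfty}), together with the stability of ``$p\cM p$ finite-dimensional'' under subprojections and Murray--von Neumann equivalence. Note that, unlike the approach via Theorem~\ref{main}, no dichotomy on the fundamental function of $E$ is needed: the argument works uniformly. A routine technical point, already present in Theorem~\ref{main}, is the spectral cap $e^{|x|}(0,\lambda]$ used to keep $|x|^{1/2}p|x|^{1/2}$ bounded when $x\notin\cM$; one checks the caps still increase to $r(x)$, so the Fatou-norm step is unaffected.
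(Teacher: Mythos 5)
Your proof is correct, but it takes a genuinely different route from the paper at the one point where Theorem~\ref{main2} really differs from Theorem~\ref{main}. The paper keeps the truncation step exactly as in Theorem~\ref{main} (truncating by a finite sum of minimal projections) but replaces the countable set: it writes the atomic algebra as $\oplus_k(\mathbb{M}_k\otimes\cA_k)$, introduces an auxiliary trace $\tau'$ giving every atom trace at least $1$, takes a countable dense subset of the separable space $L_1(\cM,\tau')$, and uses \cite[Lemma 25]{DP2014} (via $\|\cdot\|_\infty\le\|\cdot\|_{L_1(\tau')}$ on $\cF(\tau')$) to show this set approximates the truncations in the $E$-norm; this circumvents the possible failure of $E(\cM,\tau)^{oc}=E(\cM,\tau)^b$. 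You instead keep the countable dense subset of $E(\cM,\tau)^{oc}$ from Theorem~\ref{main} and show that the truncations themselves lie in $E(\cM,\tau)^{oc}$, by observing that in an atomic algebra the projections with finite-dimensional corner increase to $\mathbf 1$, that this property passes to subprojections and Murray--von Neumann equivalent projections, and that any positive decreasing net dominated by an element supported in a finite-dimensional corner converges to its infimum in the uniform (hence the $E$-) norm. Your argument is more self-contained: it avoids the structure theorem for atomic algebras, the auxiliary trace $\tau'$, and the $L_1\cap L_\infty$-type estimates, at the price of the (correct) extra observations about finite-dimensional corners; the paper's route, in turn, makes the approximation step a one-line consequence of known embedding lemmas. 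Two small points you should make explicit to be airtight, both at the same level of detail as the paper itself: the supremum identification $|x|^{1/2}p_\alpha|x|^{1/2}\uparrow|x|$ for possibly unbounded $|x|$ (a quadratic-form argument using $p_\alpha\uparrow r(x)$), and the fact that $E(\cM,\tau)^{oc}$ is itself a symmetric space with order continuous norm, so that the separability result \cite[Theorem 5.6.22]{DPS} indeed applies to it without assuming $E(\cM,\tau)^{oc}=E(\cM,\tau)^b$ --- this is consistent with how the paper invokes that theorem in the proof of Theorem~\ref{main}.
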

\begin{proof}

Without loss of generality, we may  assume that  $c_{E(\cM,\tau)}={\bf 1}$.

Note that an atomic von Neumann has the following representation:
$$\oplus_{k=1}^\infty \left(\mathcal{M}_k\otimes \mathcal{A}_k\right),$$
where $\mathcal{A}_k$'s are commutative atomic algebras.
The trace on $\cM$ is given by
$$\oplus_{k=1}^\infty \left(  {\rm Tr}_k \otimes \tau_k \right),$$
where $\tau_k$ is the trace on   $\cA_k$ and $ {\rm Tr}_k$ is the standard trace on the matrix algebra $\mathbb{M}_k$.
Since $\tau$ is separable, it follows that $L_1(\cM,\tau)$ is separable and the underlying Hilbert space can be chosen to be separable \cite[Proposition 1.1]{S00}.
Define a semifinite faithful normal trace  $\tau_k'
$ on $\cA_k$ by setting
$$\tau_k'(p)=\begin{cases}
                 \tau_k(p), & \mbox{if } \tau_k(p)\ge 1  \\
                 1, & \mbox{otherwise}
               \end{cases}$$
               for any minimal projection $p$ in $\cA_k$.
               In particular, $\tau'={\rm Tr_k}\otimes \tau_k'$ is a semifinite faithful normal trace on $\cM$.

Since $L_1(\cM,\tau')$ is separable (which is isometric to $L_1(\cM,\tau)$\cite{S00}),
it follows that there exists a dense subset $A$ in $L_1(\cM,\tau')$. In particular, 
for   any  $x\in \cF(\tau') \subset \cF(\tau) $ and any $\varepsilon>0$, there exists $z\in A$
such that
\begin{align*}
 \norm{x-z}_\infty \stackrel{\tiny \mbox{\cite[Lemma 25]{DP2014}}}{\le}  \norm{x-z}_{L_1(\cM,\tau')}<\varepsilon.
\end{align*}
Since $\norm{x-z}_{L_1(\cM,\tau)}\le \norm{x-z}_{L_1(\cM,\tau')}  $ for any $z\in \cF(\tau')$, it follows that from \cite[Lemma 25(ii)]{DP2014} there exists an element $z\in A$ such that 
\begin{align}\label{approE}
\norm{x-z}_{E(\cM,\tau)}<\varepsilon. 
\end{align}

Let $x$ be an element in the unit sphere of $ E(\cM,\tau)$ with polar decomposition $x=u|x|$.
Let $\{e_n\}_{n\ge 1}$ be a sequence of mutually orthogonal minimal projections such that $\sum_{n\ge 1}e_n={\bf 1}$. 
Since $\norm{\cdot}_E$ is a Fatou norm, it follows that there exists a finite subset $\{e_n ' \}_{1\le n\le N}$ of $\{e_n\}_{n\ge 1}$ such that
\begin{align}\label{double}
1  = \norm{x}_{E(\cM,\tau)}& = \norm{|x|}_{E(\cM,\tau)}\nonumber  \\
& \ge
\norm{ |x|^{1/2} \cdot \vee_{1\le n\le N} e_{n}'\cdot  |x|^{1/2} }_{E(\cM,\tau)}  \\
& >  \frac{3}{4}\norm{x}_{E(\cM,\tau)} =\frac{3}{4} .\nonumber
\end{align}
Since $N$ is finite and $e_n'$'s are minimal projections, it follows that $ |x|^{1/2} \cdot \vee_{1\le n\le N} e_{n}'\cdot  |x|^{1/2} $ is an element in $\cF(\tau')$ and, therefore in $\cF(\tau)$. 
By \eqref{approE}, there exists $z\in \cA$
such that
\begin{align*}
\norm{2 u   |x|^{1/2} \cdot \vee_{1\le n\le N} e_{n}'\cdot  |x|^{1/2}  -z}_{E(\cM,\tau)} <\frac18.
\end{align*}
This together with \eqref{double} implies that 
\begin{align*}
\frac{11}{8}&<\norm{ 2   |x|^{1/2} \cdot \vee_{1\le n\le N} e_{n}'\cdot  |x|^{1/2}  }_{E(\cM,\tau)} -\frac18 \\
&\le \norm{z}_{E(\cM,\tau)} \\
&\le  \norm{2  |x|^{1/2} \cdot \vee_{1\le n\le N} e_{n}'\cdot  |x|^{1/2}  }_{E(\cM,\tau)}+\frac18 \le\frac{17}{8} . 
\end{align*}
Note that 
$$-  |x| = - |x|^{1/2}  {\bf 1} |x|^{1/2 } \le |x|^{1/2} (2 \cdot  \vee_{1\le n\le N} e_{n}'-{\bf 1})|x|^{1/2} \le |x|^{1/2}  {\bf 1} |x|^{1/2} =  |x|. $$
It follows from \cite{FC88} (see also \cite[Lemma 4.2]{HSZ})
that
$$ |x|^{1/2} (2 \cdot  \vee_{1\le n\le N} e_{n}'-{\bf 1})|x|^{1/2} \prec \prec  |x|. $$
We have $|x|^{1/2} (2\cdot  \vee_{1\le n\le N} e_{n}' -{\bf 1})|x|^{1/2}  \stackrel{\tiny \mbox{\cite[Prop.3(iii)]{DP2014}}}{\in} E(\cM,\tau)$ and
\begin{align*} 
\norm{2u |x|^{1/2}\cdot  \vee_{1\le n\le N} e_{n}' |x|^{1/2}  - x}_E \le \norm{ x}_E =1   . 
\end{align*}
Therefore, we
\begin{align*}
&~\quad \norm{z -x}_{E(\cM,\tau)}\\&\le \norm{ z- 2u |x|^{1/2}\cdot  \vee_{1\le n\le N} e_{n}' |x|^{1/2}}_{E(\cM,\tau)}+ \norm{2u |x|^{1/2}\cdot  \vee_{1\le n\le N} e_{n}' |x|^{1/2} -x}_{E(\cM,\tau)} \\ &<\frac18+\norm{x}_{E(\cM,\tau)}=\frac18+1 =\frac98 <\frac{11}{8}\le \norm{z}_{E(\cM,\tau)}.
\end{align*}
This proves the UBCP of $E(\cM,\tau)$.
\end{proof}

Recall that $\ell_\infty$ has the UBCP\cite{Cheng}. It is also known that  separable symmetric sequence spaces have the UBCP.  The following corollary significantly extends known results on sequence spaces having the UBCP.
\begin{corollary}
Any  symmetric sequence space  $\ell_E(\mathbb{N})$ having the Fatou norm has the UBCP.
\end{corollary}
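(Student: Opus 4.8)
The plan is to derive this immediately from Theorem~\ref{main2} by observing that a symmetric sequence space $\ell_E(\mathbb{N})$ is exactly a symmetric operator space $E(\cM,\tau)$ for a very particular choice of atomic von Neumann algebra and trace. First I would set $\cM = \ell_\infty(\mathbb{N})$, viewed as the abelian von Neumann algebra of bounded sequences acting by multiplication on $\ell_2(\mathbb{N})$, and equip it with the trace $\tau$ given by counting measure, i.e. $\tau((a_n)_n) = \sum_n a_n$ for positive sequences. This $\cM$ is atomic, with the standard basis vectors $e_n$ as its minimal projections, all of trace $1$; and $\ell_2(\mathbb{N})$ is separable, so $\tau$ is separable (by \cite[Proposition 5.6.17]{DPS}, or directly since $\cP(\cM)\cap\cF(\tau)$ consists of finite subsets of $\mathbb{N}$, a countable set, hence trivially separable in the measure topology). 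Then $S(\cM,\tau) = S(0,\infty)$ restricted to the atomic setting is just the space of all sequences, and $E(\cM,\tau) = \{x : \mu(x)\in E\} = \ell_E(\mathbb{N})$ with matching norm --- this is the standard identification of a symmetric sequence space with its noncommutative counterpart over the atomic abelian algebra.

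Next I would address the hypotheses of Theorem~\ref{main2}. We need $E(\cM,\tau)$ to be strongly symmetric with a Fatou norm. A symmetric sequence space having the Fatou norm is strongly symmetric --- this is the sequence-space analogue of \cite[Corollary 4.5.7]{DPS} together with the remark preceding Corollary~\ref{atomlessnoinfty}; alternatively one cites the Calderón--Mityagin-type result that for sequence/function spaces the Fatou property forces strong symmetry (see \cite{KPS}). The Fatou norm hypothesis is exactly the standing assumption of the corollary. So all hypotheses of Theorem~\ref{main2} are met, and we conclude $\ell_E(\mathbb{N}) = E(\cM,\tau)$ has the UBCP.

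I do not expect a genuine obstacle here; the only thing requiring a little care is making the identification $\ell_E(\mathbb{N}) = E(\cM,\tau)$ precise --- that the generalized singular value function $\mu(x)$ of a multiplication operator by a sequence $(x_n)$ is the decreasing rearrangement of $(|x_n|)$ (extended appropriately as a step function on $(0,\infty)$), which is the atomic specialization of the computation recalled in Section~\ref{s:p} for $L^\infty(0,\infty)$. Granting that, the corollary is a one-line consequence. One should also note in passing that the carrier projection is automatically $\bf 1$ here since all atoms have equal trace (by the discussion after the definition of $c_E$), so there is no loss in the normalization used inside the proof of Theorem~\ref{main2}. I would write the proof as essentially: ``Realize $\ell_E(\mathbb{N})$ as $E(\cM,\tau)$ with $\cM=\ell_\infty(\mathbb{N})$ and $\tau$ the counting measure; this $\cM$ is atomic and $\tau$ is separable; $\ell_E(\mathbb{N})$ is strongly symmetric since its norm is Fatou; apply Theorem~\ref{main2}.''
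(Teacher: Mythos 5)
Your proposal is correct and matches the paper's intent: the corollary is stated as an immediate consequence of Theorem~\ref{main2}, obtained exactly as you do by realizing $\ell_E(\mathbb{N})$ as $E(\cM,\tau)$ for $\cM=\ell_\infty(\mathbb{N})$ with the counting trace (atomic, separable) and noting that the Fatou norm gives strong symmetry. No genuinely different route is involved, so nothing further is needed.
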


\begin{remark}
Note that the Fatou norm is essential in Theorems \ref{main} and \ref{main2}.
Indeed, there exists symmetric norm $\norm{\cdot}'$ on $\ell_\infty$ which is not a  Fatou norm such that $(\ell_\infty,\norm{\cdot}')$ fails the BCP
\cite{CCL}.
In the setting of atomless von Neumann algebras, the condition $E(0,1)\ne L_\infty(0,1)$ in Corollary \ref{atomlessnoinfty} can not be removed, see e.g. Theorem~\ref{atomlessnotbcp} and \cite{LZ2020,CKZ}.
\end{remark}

{\bf Acknowledgments} 

 The third author would like to express his gratitude to Lixin Cheng and Chunlan Jiang for inspiring suggestions on ball-covering property and von Neumann algebras.

{\bf Data availability}

Date availability is not applicable to this article as no new date were created or analyzed in this study.

{\bf Conflict of interest}

 The authors have no conflicts of interest to declare that are relevant to the content of this article.

\end{document}